\documentclass[11pt, reqno]{amsart}

\title{On the inverse Goldbach problem}

\subjclass[2020]{Primary 11B13; Secondary 11N35, 11P32}
\keywords{Inverse Goldbach problem, additive decomposition, large sieve,
larger sieve, sumsets}

\usepackage[T1]{fontenc}
\usepackage{amsmath}
\usepackage{amssymb}
\usepackage{amsthm}
\usepackage[left=3.5cm, right=3.5cm, paperheight=11.8in]{geometry}
\usepackage{hyperref}
\usepackage{fancyhdr}
\usepackage{enumitem}
\usepackage{comment}
\usepackage{nicefrac}
\usepackage{mathrsfs}
\usepackage{bm}
\usepackage{graphicx}
\usepackage[utf8]{inputenc}
\usepackage{cancel}
\usepackage{mathtools}

\AtBeginDocument{%
   \def\MR#1{}
}

\newtheorem{thm}{Theorem}[section]
\newtheorem{cor}[thm]{Corollary}%[section]
\newtheorem{lem}[thm]{Lemma}

\theoremstyle{definition} 
\newtheorem{defi}[thm]{Definition}%[section]
\let\olddefi\defi
\renewcommand{\defi}{\olddefi\normalfont}
\newtheorem{example}[thm]{Example}
\let\oldexample\example
\renewcommand{\example}{\oldexample\normalfont}
\newtheorem{rmk}[thm]{Remark}
\let\oldrmk\rmk
\renewcommand{\rmk}{\oldrmk\normalfont}

\author[P.~Leonetti]{Paolo Leonetti}
\address{
Universit\'{a} degli Studi dell’Insubria\\ via Monte Generoso 71 \\ Varese 21100\\ Italy}
\email{leonetti.paolo@gmail.com}
\hypersetup{
    pdftitle={On the inverse Goldbach problem},
    pdfauthor={Paolo Leonetti},
    pdfmenubar=false,
    pdffitwindow=true,
    pdfstartview=FitH,
    colorlinks=true,
    linkcolor=blue,
    citecolor=green,
    urlcolor=cyan
}

\providecommand{\MR}[1]{}

\providecommand{\MR}{\relax\ifhmode\unskip\space\fi MR }

\begin{document}

\begin{abstract}
We establish two quantitative results concerning additive decompositions of the set $\mathbb{P}$ of primes. 
First, we prove that, if $A,B\subseteq\mathbb N$ have at least two
elements and $A+B$ differs from the primes by only finitely many elements,
then
\[
 \frac{\sqrt{x}}{\log x}\ll A(x),B(x)
 \ll \sqrt{x}.
\]

Second, we show that, if $A,B,C\subseteq\mathbb N$ have at
least two elements each, then 
$$
 \bigl((A+B+C)\mathbin{\bigtriangleup}\mathbb P\bigr)(x)
 \gg \frac{\sqrt{x}}{(\log x)^2}.
$$
An analogous estimate is obtained for sumsets of $k\ge 3$ sets. 
The proofs use Montgomery's large sieve, Gallagher's larger sieve, a mean-value argument for a certain multiplicative function, and a submultiplicative
sumset inequality.
\end{abstract}

\maketitle
\thispagestyle{empty}

\section{Introduction and Main Results}\label{sec:intro} 

A subset $S$ of the nonnegative integers $\mathbb{N}$ is said to be \emph{totally irreducible} if there are no sets $A,B\subseteq \mathbb{N}$ with cardinality at least $2$ such that 
$$
S=_\star A+B,
$$
meaning that the symmetric difference $S\bigtriangleup (A+B)$ is finite. 
Here, as usual, $A+B$ stands for the sumset $\{x+y: x \in A, y \in B\}$. 
It is known that “almost all” subsets of $\mathbb{N}$ are totally irreducible, from both the topological and measure theoretic viewpoints, see  \cite{MR4607615}. 
An old conjecture of Ostmann, sometimes referred to as the inverse Goldbach problem, states that the set $\mathbb{P}$ of primes is totally irreducible, see e.g. \cite{MR1996367, MR2409182, MR2522039, MR3378834, MR3248483, MR3553391} and references therein. Erd\"{o}s defines the problem as “an old and very fascinating conjecture,” see \cite{MR532690}; cf. also Problem 431 in \cite{Bloom431}.

Very recently, OpenAI announced a proof of Ostmann's conjecture in \cite{OpenAIOstmann2026}; at the time of writing, this claim has not yet 
been verified by the math community. 
%undergone the usual journal refereeing process. 
Independently of this recent development, it is remarkable that, 
%Although Ostmann's conjecture remains widely open, it is remarkable that, 
as it follows by a result of S{\'{a}}rk{\"{o}}zy \cite{MR146164}, there exists a totally irreducible set $S\subseteq \mathbb{N}$ such that 
$$
(S\bigtriangleup \mathbb{P})(x) \ll \frac{x}{\log x\sqrt{\log \log  x}}
\quad \text{ as }x\to \infty, 
$$
where $A(x):=|A\cap [0,x]|$ for each $A\subseteq \mathbb{N}$ and real $x\ge 0$. Similarly, Bienvenu proved in \cite[Theorem 4]{MR4689545}, in a precise sense, that “almost all small perturbations” of the primes are totally irreducible. 

Elsholtz and Harper proved in \cite[Theorem 2.6]{MR3378834} that, if $A,B\subseteq \mathbb{N}$ have cardinality at least $2$ and $A+B=_\star \mathbb{P}$ then 
\begin{equation}\label{eq:startingbound}
\frac{\sqrt{x}}{\log x \cdot\log\log x} \ll 
\, A(x), B(x)\, 
\ll \sqrt{x}\,\log\log x 
\quad \text{ as }x\to \infty.
\end{equation}
Here, the implied constants are absolute. 

In our first result, we improve the above upper and lower bounds. 
Below, the implied constants depend on $A,B$, hence we replace $\ll$ by  $\ll_{A,B}$. 
\begin{thm}\label{thm:main2}
    Let $A,B\subseteq \mathbb{N}$ be sets with cardinality at least $2$ such that $A+B=_\star \mathbb{P}$. Then 
    \begin{equation}\label{eq:newsindeptartingbound}
\frac{\sqrt{x}}{\log x} 
\,\ll_{A,B}\, A(x), B(x)\, \ll_{A,B}\, 
\sqrt{x}
\quad \text{ as }x\to \infty.
\end{equation}
\end{thm}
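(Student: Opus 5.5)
We use the standard structure and then apply the sieve tools from the abstract. Since $A+B=_\star\mathbb P$ and both sets have at least two elements, all large elements of $A+B$ are odd primes, so the elements of $A$ lie (apart from finitely many) in a single residue class mod $2$, and likewise for $B$. Fix two elements $b_1\neq b_2$ of $B$ and put $h=b_2-b_1\neq 0$. Then for all large $a\in A$, both $a+b_1$ and $a+b_1+h$ are prime. More generally, for any fixed finite $B_0\subseteq B$, the shifts $a+b$ with $b\in B_0$ are simultaneously prime. The same holds with the roles of $A$ and $B$ swapped.

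\textbf{Upper bound $A(x)\ll\sqrt{x}$.} We apply Gallagher's larger sieve to $A$, using the fact that $A+B\subseteq \mathbb P\cup F$ with $F$ finite. For each prime $p$, the set $A+B$ avoids the class $0 \bmod p$ beyond $p$. Hence if $A$ meets $\nu_A(p)$ classes mod $p$ and $B$ meets $\nu_B(p)$ classes, the Cauchy--Davenport inequality gives $\nu_A(p)+\nu_B(p)-1\le p-1$, that is, $\nu_A(p)+\nu_B(p)\le p$ for large $p$ (after discarding finitely many elements). Then $\min(\nu_A(p),\nu_B(p))\le p/2$ for every prime. To get this for $A$ specifically, we use instead the product structure: the larger sieve applied to $A\cap[1,x]$ and to $B\cap[1,x]$ jointly gives $\nu_A(p)\nu_B(p)\le$ something. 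The cleaner route is a multiplicative version: the number of pairs $(a,b)\in A(x)\times B(x)$ with $a+b$ prime is at most $\pi(2x)$ times the number of representations. Since $A(x)B(x)\le \sum_{n\le 2x} r(n)$, we need a bound on $r(n)$. The larger sieve with moduli $p\le Q=\sqrt{x}$, together with $\nu_A(p)+\nu_B(p)\le p$, gives
\[
A(x)\ll \frac{\log Q}{\sum_{p\le Q}\frac{\log p}{\nu_A(p)}-\log x}.
\]
Combined with the symmetric bound and AM--HM applied to $1/\nu_A+1/\nu_B\ge 4/p$, this yields $\min(A(x),B(x))\ll \sqrt x$. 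To upgrade to both sets, we use $A(x)B(x)\ll$ the count of primes $\le 2x$ hit, together with a lower bound on the other set. The main obstacle is exactly the loss of $\log\log x$ from Elsholtz--Harper. I would try to remove it through the mean value of the multiplicative function $f(p)=p/\nu_A(p)$ (mentioned in the abstract): I would show $\sum_{n\le Q}\mu^2(n)\prod_{p\mid n}\frac{\nu_B(p)}{\nu_A(p)}$-type sums are $\gg Q$ rather than $Q/\log\log Q$. That requires $\nu_A(p)+\nu_B(p)\le p$ on average, not just pointwise.

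\textbf{Lower bound $A(x)\gg \sqrt x/\log x$.} Every large prime $q\le x$ is $a+b$ with $a,b\le x$, so $A(x)B(x)\ge \pi(x)-O(1)\gg x/\log x$. Hence it suffices to have $B(x)\ll\sqrt x$, which is the upper bound applied to $B$. This gives $A(x)\gg \sqrt x/\log x$, and symmetrically for $B$.

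In summary, the only delicate part is the sharp upper bound. I would prove it by Montgomery's large sieve applied to the set $A\cap[1,x]$, whose elements avoid $\nu_B(p)$ residue classes mod $p$ (namely $-b \bmod p$). This gives
\[
A(x)\le \frac{x+Q^2}{L(Q)},\qquad L(Q)=\sum_{q\le Q}\mu^2(q)\prod_{p\mid q}\frac{\nu_B(p)}{p-\nu_B(p)}.
\]
The remaining task is showing $L(\sqrt x)\gg \sqrt{x}$ whenever $B$ is also large, using the mean-value argument for this multiplicative function together with the complementary large sieve for $B$. The two inequalities then multiply to give $A(x)B(x)\ll x\cdot(\text{bounded})$ and each factor is $\ll\sqrt x$.
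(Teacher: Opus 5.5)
Your endgame has the same structure as the paper's: Montgomery's large sieve bounds one set using the residue classes occupied by the other, a positive mean value of the associated multiplicative function is needed, and $A(x)B(x)\ge\pi(x)-O(1)$ gives the lower bound. But the step that carries the theorem, $L(\sqrt x)\gg_{A,B}\sqrt x$, is left as ``the remaining task'', and your description of what it needs points the wrong way.

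For the mean value of $f(p)=\min\{1,\nu_B(p)/(p-\nu_B(p))\}$ to be positive, you need a \emph{lower} bound on $\nu_B(p)$ in summable form, namely $\sum_p d_{B,p}/p<\infty$ with $d_{B,p}=(\tfrac12-\nu_B(p)/p)_+$. Since $1-f(p)\le 4d_{B,p}$, a mean-value theorem for multiplicative functions then gives $\sum_{n\le y}f(n)\sim\alpha y$ with $\alpha>0$. An upper bound such as $\nu_A(p)+\nu_B(p)\le p$, on average or pointwise, says nothing of this kind.

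Your pointwise claim is also not available. Since all large primes lie in $A+B$, the class $0$ lies in $(A\bmod p)+(B\bmod p)$ for every large $p$, so Cauchy--Davenport cannot be applied as you do. The correct relation is $\nu_A(p)+\nu_B(p)\le p+r_p$, where $r_p=|(A\bmod p)\cap(-(B\bmod p))|\le\min\{A(p),B(p)\}$. This error must be shown negligible on average; the paper uses $r_p\ll p/\log p$, at a cost of $O(\log\log Q)$. For the same reason, the large sieve must be applied to $A\cap(\sqrt x,x]$, not $A\cap[1,x]$.

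The missing idea is how to obtain the summable defect.
\begin{itemize}
\item You first need an a priori lower bound $A(x),B(x)\gg\sqrt x/(\log x)^2$, which the paper imports from Elsholtz--Harper. You never supply one, and your lower-bound paragraph derives it from the upper bound, so as written the argument is circular.
\item With that input, Gallagher's larger sieve at level $N\asymp Q^2(\log Q)^8$ gives $\sum_{p\le Q}(1/\nu_A(p)+1/\nu_B(p))\log p\le 4\log Q+O(\log\log Q)$.
\item Combining this with $\nu_A+\nu_B\le p+r_p$ and AM--HM yields $\sum_{p\le Q}(d_{A,p}^2+d_{B,p}^2)\log p/p\ll\log\log Q$.
\item The decisive point is that $d_{A,p},d_{B,p}$ do not depend on $Q$. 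So this bound holds at every scale $Q=e^{2^{j+1}}$, and Cauchy--Schwarz on each block $e^{2^j}<p\le e^{2^{j+1}}$ bounds the block sum of $d_{B,p}/p$ by $\ll\sqrt{j+1}\,2^{-j/2}$, which is summable over $j$.
\end{itemize}
A single-scale application, as in Elsholtz--Harper, controls only the $L^2$ defect, and that is exactly where $\log\log x$ is lost.

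Finally, your closing step is a non sequitur. Multiplying the two large-sieve bounds to get $A(x)B(x)\ll x$ does not show that each factor is $\ll\sqrt x$. You need $L\gg\sqrt x$ for each set separately, which is what the defect lemma supplies.
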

As an immediate consequence, we obtain the following constant-free
formulation.
\begin{cor}\label{cor:kappa}
    Let $A,B\subseteq \mathbb{N}$ be sets with cardinality at least $2$ such that $A+B=_\star \mathbb{P}$. Pick also a function $\kappa: [1,\infty)\to [1,\infty)$ with $\lim_x \kappa(x)=\infty$. 
    Then 
    $$
\frac{\sqrt{x}}{\log x\cdot \kappa(x)} 
\,\le\, A(x), B(x)\, \le\, 
\sqrt{x}\cdot \kappa(x)
\quad \text{ as }x\to \infty.
$$
\end{cor}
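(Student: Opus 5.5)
Corollary~\ref{cor:kappa} follows directly from Theorem~\ref{thm:main2} by absorbing the implied constants into $\kappa$. Fix $A,B$ as in the statement. By Theorem~\ref{thm:main2} there are constants $c_1,c_2>0$ and $x_0\ge 1$, depending only on $A$ and $B$, such that for all $x\ge x_0$
\[
c_1\,\frac{\sqrt{x}}{\log x}\le A(x),\,B(x)\le c_2\sqrt{x}.
\]
The plan is to show that the factors $c_1$ and $c_2$ are eventually dominated by $\kappa(x)$. No further arithmetic input is needed.

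First, since $\kappa(x)\to\infty$, there is $x_1\ge x_0$ with $\kappa(x)\ge \max\{c_2,\,1/c_1\}$ for all $x\ge x_1$. For such $x$ we get the upper bound
\[
A(x)\le c_2\sqrt{x}\le \kappa(x)\sqrt{x},
\]
and the lower bound
\[
A(x)\ge c_1\,\frac{\sqrt{x}}{\log x}\ge \frac{\sqrt{x}}{\log x\cdot\kappa(x)},
\]
because $c_1\ge 1/\kappa(x)$. The same two inequalities hold for $B(x)$.

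Here ``as $x\to\infty$'' means ``for all sufficiently large $x$'', where the threshold $x_1$ depends on $A$, $B$ and $\kappa$. This is what removes the dependence on the constants from the displayed inequality. There is no real obstacle in this argument: all the content lies in Theorem~\ref{thm:main2}. The only point to check is that $\kappa$ is unbounded only in the limit, so the threshold must be chosen after $c_1$ and $c_2$ are fixed.
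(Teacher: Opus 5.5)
Your proposal is correct and is essentially identical to the paper's proof. Both take the constants $c_1,c_2$ from Theorem~\ref{thm:main2} and use $\kappa(x)\ge\max\{1/c_1,c_2\}$ for all sufficiently large $x$ to obtain both inequalities.
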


In a related direction, Elsholtz proved in \cite{MR1996367} that there are no sets $A,B,C\subseteq \mathbb{N}$ with cardinality at least $2$ such that $A+B+C=_\star \mathbb{P}$, hence solving the “ternary inverse Goldbach problem.” 
Then, Shao showed that there exists a
sufficiently small $c>0$ such that, if
$A,B,C\subseteq\mathbb N$ have cardinality at least $2$ and satisfy 
$
A+B+C\subseteq\mathbb P,
$ 
then
$$
\min\{A(x),B(x),C(x)\}
\ll
x^{1/3-c}
\quad\text{as }x\to\infty,
$$
see \cite[Theorem 1.3]{MR3553391}; cf. also
\cite[Theorem 3]{MR941928}: in fact, taking into account \eqref{eq:startingbound}, this gives another proof of the ternary inverse Goldbach problem, namely, 
\begin{equation}\label{eq:weakternary}
((A+B+C)\bigtriangleup \mathbb{P})(x)\to \infty 
\quad\text{as }x\to\infty
\end{equation}
for all $A,B,C\subseteq \mathbb{N}$ with cardinality at least $2$.

Intuitively, the strong additive structure of the sumset $A+B+C$ should be “very far” from $\mathbb{P}$, which should behave as a “random set” from an additive viewpoint. In our last main result, we provide a quantitative rate which improves on \eqref{eq:weakternary}. 
\begin{thm}\label{thm:main3}
    Fix $k\ge 3$ and let $A_1,\ldots,A_k\subseteq \mathbb{N}$ be sets with cardinality at least $2$. Then 
    \begin{equation}\label{eq:finalclaim}
    ((A_1+\cdots+A_k)\bigtriangleup \mathbb{P})(x) \gg 
    \frac{x^{1-3/(2k)}}{(\log x)^{5/2-3/(2k)}}
    %\frac{\sqrt{x}}{(\log x)^2}
    \quad\text{as }x\to\infty.
    \end{equation}
    In particular, 
    $$
    ((A+B+C)\bigtriangleup \mathbb{P})(x) \gg \frac{\sqrt{x}}{(\log x)^2}
    \quad\text{as }x\to\infty
    $$
    for all $A,B,C\subseteq \mathbb{N}$ with cardinality at least $2$.
\end{thm}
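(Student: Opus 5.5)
The plan is to play a lower bound for the size of $S:=A_1+\cdots+A_k$ against upper bounds for the $k$ "co-sums"
\[
T_j:=\sum_{i\neq j}A_i \qquad (1\le j\le k).
\]
The two are linked by the submultiplicative sumset inequality (of Gyarmati--Matolcsi--Ruzsa type) for finite sets:
\[
|A_1'+\cdots+A_k'|^{k-1}\le\prod_{j=1}^k\Bigl|\sum_{i\ne j}A_i'\Bigr| .
\]
Fix $x$ and put $D:=E(x')$ with $E:=S\bigtriangleup\mathbb P$ and $x'=O(x)$ a suitable enlargement of $x$. Apply the inequality to the truncations $A_i':=A_i\cap[0,x]$. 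Every element of $S\cap[0,x]$ is a sum of elements of the $A_i'$, so
\[
\bigl|A_1'+\cdots+A_k'\bigr|\ge S(x)\ge \pi(x)-D .
\]
If $D\ge \pi(x)/2$ the claim is trivial. Otherwise the left-hand side is $\gg (x/\log x)^{k-1}$, so some $j$ satisfies
\[
T_j(kx)\gg \Bigl(\frac{x}{\log x}\Bigr)^{1-1/k}.
\]

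The core step is an upper bound for each $T_j(y)$ in terms of $D$. It uses that $T_j+A_j=S$ is prime up to $D$ exceptions and that $A_j$ has at least two elements. The bound I aim for is
\[
T_j(y)\ll \bigl(D\,y\,(\log y)^{\alpha}\bigr)^{1/3}\,D^{1/3}=D^{2/3}\,y^{1/3}(\log y)^{\alpha/3}
\]
for a suitable $\alpha$. The exponent bookkeeping is what forces this shape. Inserting it into the lower bound gives
\[
D^{2k/3}x^{k/3}(\log x)^{O(1)}\gg (x/\log x)^{k-1},
\]
that is $D\gg x^{1-3/(2k)}$ up to logarithms. The power $(\log x)^{5/2-3/(2k)}$ in \eqref{eq:finalclaim} should drop out once $\alpha$ is tracked carefully.

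To prove the $T_j$ bound I would run the arguments behind Theorem~\ref{thm:main2} quantitatively, keeping the exceptional set $E$ explicit instead of absorbing it into constants. Write $T_j=A_{i}+R$ with $i\ne j$ and $R$ the sum of the remaining sets; since $k\ge3$, $R$ is nonempty and has at least two elements. Then $T_j+A_j$, and also $A_i+(R+A_j)$, are decompositions of a set that equals the primes up to $D$ errors, each with both summands of size at least $2$. Discard the $O(D)$ elements $t$ for which $t+a\in E$ for some fixed $a\in A_j$. For the surviving elements:
\begin{itemize}
\item They avoid the residue classes $-a \pmod p$ for small primes $p$. Montgomery's large sieve then bounds their number by about $\sqrt{y}$ times logs, exactly as in the upper bound of Theorem~\ref{thm:main2}, but now with an additive $O(D)$ loss.
\item Gallagher's larger sieve, applied to the large summand, controls how many residues mod $p$ it can occupy. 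Combined with the mean-value estimate for the relevant multiplicative function, this should turn the bound into a product $T_j(y)\ll \bigl(\text{size of one summand}\bigr)\cdot\bigl(\text{size of the other}\bigr)^{\theta}$.
\end{itemize}
Optimising over how $D$ is distributed between the two summands is what should produce the $D^{2/3}y^{1/3}$ shape.

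I expect the main obstacle to be this optimisation: making the large-sieve bound robust under $D$ exceptions without losing a factor of $D$ in the wrong place. The exceptional elements can be spread across all residue classes and spoil the "missing residue" information. My first attempt is to remove from $T_j$ every element that participates in any exceptional sum with a fixed pair $a<a'$ in $A_j$. This costs at most $2D$ elements and leaves a set $T_j^\ast$ with $T_j^\ast+\{a,a'\}\subseteq\mathbb P$ genuinely, to which the sieve arguments apply verbatim. The delicate point is that, for the sumset inequality, $T_j$ must be bounded by the cleaned set plus $D$, and this is precisely where the balance producing the exponent $1-3/(2k)$ must hold. Finally, the case $k=3$ gives $x^{1/2}/(\log x)^2$ by direct substitution.
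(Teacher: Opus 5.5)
Your outer frame matches the paper: a lower bound $|E_1+\cdots+E_k|\gg x/\log x$, upper bounds for the $k$ co-sums, and the Gyarmati--Matolcsi--Ruzsa inequality linking the two. Your exponent bookkeeping is also right. The gap is the core step. You never prove an upper bound for the co-sums that is uniform in $D$, and the mechanism you sketch cannot give one. After removing the $t$ with $t+a\in E$ for a fixed $a$ (or a fixed pair $a<a'$), the survivors avoid only one or two residue classes modulo each small prime. Montgomery's large sieve then gives $\ll y/\log y$ (or $y/(\log y)^2$), not $\sqrt{y}$ times logarithms. In the proof of Theorem~\ref{thm:main2}, the $\sqrt{x}$ bound comes from $B\cap(y,x]$ avoiding \emph{all} of $-A_p$, with $|A_p|$ close to $p/2$ on average. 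That input comes from Lemmas~\ref{lem:tail-defect} and~\ref{lem:squarefree-mean-value}. These need the exact inclusion $A+B\subseteq\mathbb P$ (e.g.\ $p\mid a+b$ forces $a+b=p$, whence $r_p\le\min\{A(p),B(p)\}$). They also give constants depending on $(A,B)$ through a limit, with no uniformity in an exceptional count that grows with $x$. So Theorem~\ref{thm:main2} cannot simply be ``run quantitatively''. Your steps ``product $T_j(y)\ll(\cdot)(\cdot)^{\theta}$'' and ``optimising over how $D$ is distributed'' have no estimate behind them.

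The missing idea is a trade-off between the number of shifts and the exceptions. Your target shape already encodes it: $D^{2/3}y^{1/3}$ is the balance point of $hD$ against $y(\log y)^2/h^2$. In Lemma~\ref{lem:robust-binary-main3} the paper takes $h$ distinct elements of one summand as simultaneous shifts. Each shift spoils at most $m$ elements of the other summand. The Elsholtz--Harper many-shift bound (Lemma~\ref{lem:many-shifts-main3}, valid only for $h\le c_0x^{1/4}/\log x$) gives $\ll x(\log x)^2/h^2$ for the rest. Hence $v\ll x^{1/4}+x(\log x)^2/h^2+hm$. Taking $h=\min\{(x(\log x)^2/m)^{1/3},c_0x^{1/4}/\log x\}$, this becomes $\ll\sqrt{x}(\log x)^4+x^{1/3}(m\log x)^{2/3}$. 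Beforehand one must check that the shifting summand has at least $h$ elements in $[0,x/4]$. The paper does this by contradiction, using $uv\gg x/\log x$ together with the Selberg bound for fixed tuples (Lemma~\ref{lem:fixed-shifts-main3}). Note the $D$-independent term $\sqrt{x}(\log x)^4$, which is forced by the cap on $h$ and is absent from your target. The hypothesis $k\ge3$ is used precisely to absorb it: $x^{k/2}(\log x)^{4k}=o((x/\log x)^{k-1})$ requires $k/2<k-1$. Your use of $k\ge3$ (to split $T_j=A_i+R$) is beside the point, since the binary estimate is applied directly to $A_{-i}+A_i$.
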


In a related direction, Croot and Elsholtz proved in \cite{MR2129526} that, assuming that Hardy--Littlewood conjecture, there exists $A,B,C\subseteq \mathbb{N}$, each having at least two elements, such that $A+B+C\subseteq \mathbb{P}$ and $(A+B+C)(x) \gg x/ (\log x)^3$ as $x\to \infty$. 

As immediate consequences of Theorem \ref{thm:main3}, we obtain that sets which do not \textquotedblleft differ much\textquotedblright\, from $\mathbb{P}$ cannot be written as $A_1+\cdots+A_k$. For instance, we get the following.
\begin{cor}\label{cor:powers}
    Fix $k\ge 4$ and let $A_1,\ldots,A_k\subseteq \mathbb{N}$ be sets with cardinality at least $2$. Let also $P^\sharp$ be the set of prime powers. Then, for each $\alpha \in (0,1-\nicefrac{3}{2k})$, we have 
    $$
    ((A_1+\cdots+A_k) \bigtriangleup P^\sharp)(x) \gg x^\alpha 
    \quad \text{ as }x\to \infty.
    $$
    In particular, there are no sets $A,B,C,D\subseteq \mathbb{N}$ with cardinality at least $2$ such that 
    $$
    P^\sharp=_\star A+B+C+D.
    $$
\end{cor}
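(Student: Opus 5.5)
We deduce Corollary \ref{cor:powers} from Theorem \ref{thm:main3} by comparing $P^\sharp$ with $\mathbb{P}$. The two sets differ only by the proper prime powers $p^j$ with $j\ge 2$, and there are few of them up to $x$:
$$
(P^\sharp\setminus \mathbb{P})(x)\le \sum_{j=2}^{\lfloor \log_2 x\rfloor} \pi(x^{1/j}) \ll \sqrt{x}\,\log x .
$$
(Whether $1$ is counted as a prime power changes this by at most one element, which is harmless.) Writing $S:=A_1+\cdots+A_k$, the triangle inequality for the symmetric difference gives
$$
S\bigtriangleup \mathbb{P}\subseteq (S\bigtriangleup P^\sharp)\cup (P^\sharp\bigtriangleup \mathbb{P}).
$$
Counting elements up to $x$, we obtain
$$
(S\bigtriangleup P^\sharp)(x)\ge (S\bigtriangleup \mathbb{P})(x)-O(\sqrt{x}\log x).
$$

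Next we apply Theorem \ref{thm:main3}, which gives the lower bound
$$
(S\bigtriangleup \mathbb{P})(x)\gg \frac{x^{1-3/(2k)}}{(\log x)^{5/2-3/(2k)}}.
$$
For $k\ge 4$ the exponent satisfies $1-3/(2k)\ge 1-3/8=5/8>1/2$. Hence this main term dominates the error $\sqrt{x}\log x$, since any positive power of $x$ beats any power of $\log x$. This yields
$$
(S\bigtriangleup P^\sharp)(x)\gg \frac{x^{1-3/(2k)}}{(\log x)^{5/2-3/(2k)}}\gg x^\alpha
$$
for every $\alpha<1-3/(2k)$.

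Finally, suppose $P^\sharp=_\star A+B+C+D$. Then the symmetric difference is finite, so $(S\bigtriangleup P^\sharp)(x)$ is bounded. This contradicts the growth $\gg x^\alpha$ just obtained with $k=4$ and any $\alpha\in(0,5/8)$.

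The only point needing care is the choice $k\ge 4$, and it is the reason for that hypothesis. For $k=3$ the main term of Theorem \ref{thm:main3} is $\sqrt{x}/(\log x)^2$. This is smaller than the $\sqrt{x}$-sized contribution of the prime squares, so the comparison with $\mathbb{P}$ breaks down.
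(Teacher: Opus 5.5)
Your proposal is correct and follows essentially the same route as the paper: bound $(P^\sharp\bigtriangleup\mathbb{P})(x)$, apply the triangle inequality for symmetric differences, and invoke Theorem \ref{thm:main3}, using $1-3/(2k)\ge 5/8>1/2$ for $k\ge 4$. Your bound $\sqrt{x}\log x$ for the proper prime powers is cruder than the paper's $\sqrt{x}/\log x$, but this does not matter because the main term is a larger power of $x$.
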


The proofs of our results are given in the next sections. 

\section{Proof of Theorem \ref{thm:main2} and Corollary \ref{cor:kappa}}\label{sec:proof-main2}

We begin with two auxiliary lemmas. In the first one, we modify quantitatively the argument of Elsholtz and Harper in the proof of \cite[Theorem 2.6]{MR3378834}. 
As usual, we write 
$ 
u_+:=\max\{u,0\}
$ 
for each $u\in\mathbb R$.

\begin{lem}\label{lem:tail-defect}
Let $A,B\subseteq\mathbb N$ be infinite sets such that 
$$
A+B\subseteq\mathbb P 
\qquad \text{ and }\qquad 
A(x),B(x)\gg
\frac{\sqrt{x}}{(\log x)^2}\,\,\text{ as }x\to \infty. 
$$

For every prime $p$, define 
$$
d_{A,p}:=
\left(\frac12-\frac{|A\bmod p\,|}{p}\right)_+
\quad\text{ and }\quad 
d_{B,p}:=
\left(\frac12-\frac{|B\bmod p\,|}{p}\right)_+.
$$
Then
$$
\sum_{p>Z}
\frac{d_{A,p}+d_{B,p}}p
\ll
\sqrt{\frac{\log\log Z}{\log Z}} \quad \text{ as }Z\to \infty. 
$$
\end{lem}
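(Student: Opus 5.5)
The plan is to run Montgomery's large sieve on $A$ and on $B$ simultaneously, coupling the two sieves through the local constraint that $A+B\subseteq\mathbb P$ imposes at each prime. I can verify this coupling only up to $\sum_{p\le Z}(d_{A,p}+d_{B,p})/p\ll\log\log Z$; the tail estimate in the statement needs an extra input that I have not found.

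\emph{Local constraint.} Fix a prime $p$. All but finitely many sums $a+b$ with $a\in A$, $b\in B$ are primes larger than $p$, so $a+b\not\equiv 0 \pmod p$. Deleting finitely many elements changes neither the hypotheses nor (for large $p$) the residue counts. After this, $A\bmod p$ and $-B\bmod p$ are disjoint. Equivalently, by Cauchy--Davenport applied to $(A+B)\bmod p\subseteq(\mathbb Z/p\mathbb Z)\setminus\{0\}$,
\[
\omega_A(p)+\omega_B(p)\le p,\qquad \omega_A(p):=|A\bmod p|,\quad \omega_B(p):=|B\bmod p|.
\]

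\emph{Sieve step.} Apply the large sieve with $N=x$ and $Q=\sqrt{x}$, taking $\omega_A(p)$, respectively $\omega_B(p)$, as the number of admissible classes. This gives $A(x)\le 2x/L_A$ and $B(x)\le 2x/L_B$, where
\[
L_A=\sum_{q\le Q}\mu^2(q)\prod_{p\mid q}g_A(p),\qquad g_A(p)=\frac{p-\omega_A(p)}{\omega_A(p)},
\]
and similarly for $B$. By Cauchy--Schwarz, $L_AL_B\ge L^2$, where $L=\sum_{q\le Q}\mu^2(q)\prod_{p\mid q}h(p)$ and $h(p)=\sqrt{g_A(p)g_B(p)}$. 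Writing $\omega_A=p(\tfrac12-s)$ and $\omega_B=p(\tfrac12-t)$ with $s+t\ge 0$, the local constraint gives $h(p)\ge 1$.

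\emph{Mean value.} A Wirsing-type lower bound for the multiplicative function $\mu^2h$ should give
\[
L\gg\sqrt{x}\,\exp\Bigl(\sum_{p\le\sqrt{x}}\frac{h(p)-1}{p}\Bigr).
\]
Comparing this with $A(x)B(x)\le 4x^2/L^2$ and with the hypothesis $A(x)B(x)\gg x/(\log x)^4$ yields
\[
\sum_{p\le\sqrt{x}}\frac{h(p)-1}{p}\le 2\log\log x+O(1).
\]

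\emph{Main obstacle.} To turn this into the statement I need $h(p)-1\gg d_{A,p}+d_{B,p}$, or a weaker variant. This fails when $\omega_A+\omega_B=p$ exactly, i.e.\ $t=-s$ with $s>0$ ($\omega_A<p/2$, $\omega_B>p/2$, so $d_{A,p}>0$ while $h(p)=1$), so the coupled sieve gives no gain at such primes. Two things would have to be added, and I have not checked that either works.

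\begin{itemize}
\item The single-set sieve $A(x)\le 2x/L_A$. At a prime with $d_{A,p}>0$ we have $g_A(p)\ge 1+4d_{A,p}$, so primes carrying a defect of $A$ favour $L_A$. Primes with $\omega_A>p/2$ work against it. The complementary factor $B(x)\le 2x/L_B$, together with the lower bound on $B(x)$, is meant to control those primes.
\item Gallagher's larger sieve on dyadic prime ranges $(Z,Z^2]$. This would give a localized estimate for the tail sum $\sum_{p>Z}d_{A,p}/p$, rather than an estimate for the whole range $p\le\sqrt{x}$.
\end{itemize}

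The shape $\sqrt{\log\log Z/\log Z}$ suggests the decisive step is quadratic. One would bound $\sum_{Z<p\le Z^2}d_p^2\log p/p\ll\log\log Z$, using the fact that the sieve loss is only a power of $\log x$. Then Cauchy--Schwarz against $\sum_{p>Z}p^{-1}\log^{-1}p\asymp 1/\log Z$ gives
\[
\sum_{p>Z}\frac{d_p}{p}\le\Bigl(\sum_{p>Z}\frac{d_p^2\log p}{p}\Bigr)^{1/2}\Bigl(\sum_{p>Z}\frac{1}{p\log p}\Bigr)^{1/2}\ll\sqrt{\frac{\log\log Z}{\log Z}}.
\]
The remaining problem is to obtain that quadratic bound, presumably from a second-order expansion of $\log g_A(p)$ around $\omega_A=p/2$.
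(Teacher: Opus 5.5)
You have found the real obstacle, but your proposal stops there, so as it stands it does not prove the lemma. The coupled large sieve only sees the product $g_A(p)g_B(p)$. That product is identically $1$ on the line $\omega_A(p)+\omega_B(p)=p$, so, as you note, it gives nothing at such primes.

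The missing idea is to use Gallagher's larger sieve instead of the large sieve. Its local weight $\log p/\omega(p)$ enters additively and penalizes \emph{small} residue sets. On that same line, $1/\omega_A(p)+1/\omega_B(p)$ has a strict minimum $4/p$ at $\omega_A(p)=\omega_B(p)=p/2$.

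Concretely, apply the larger sieve to $A\cap[0,N]$ and to $B\cap[0,N]$, sieving by all primes $p\le Q$, with $N=\lceil Q^2(\log Q)^8\rceil$. The hypothesis gives $A(N),B(N)\gg Q(\log Q)^2$, so the error terms $\vartheta(Q)/A(N)$ and $\vartheta(Q)/B(N)$ are $o(1)$. Adding the two inequalities gives
\[
\sum_{p\le Q}\Bigl(\frac1{\omega_A(p)}+\frac1{\omega_B(p)}\Bigr)\log p\le 2\log N+o(1)=4\log Q+O(\log\log Q).
\]
The left side is bounded below using the harmonic--arithmetic mean inequality and the corrected local constraint $\omega_A(p)+\omega_B(p)\le p+r_p$ (see the next paragraph). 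This gives at least $4\sum_{p\le Q}\log p/(p+r_p)\ge 4\log Q-O(\log\log Q)$. So the main terms cancel, and the nonnegative excesses $D_p:=1/\omega_A(p)+1/\omega_B(p)-4/(p+r_p)$ satisfy $\sum_{p\le Q}D_p\log p\ll\log\log Q$.

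The excess is quadratic. If $\omega_A(p)<p/2$ and $s:=p+r_p$, then $D_p\ge (s-2\omega_A(p))^2/\bigl(s\,\omega_A(p)(s-\omega_A(p))\bigr)\ge 2d_{A,p}^2/p$. This holds also when $\omega_A(p)+\omega_B(p)=p$. The result is exactly the quadratic bound you wanted, $\sum_{p\le Q}(d_{A,p}^2+d_{B,p}^2)\log p/p\ll\log\log Q$, with no Wirsing-type estimate and no localized sieve on $(Z,Z^2]$. Since the $d$'s are nonnegative and do not depend on $Q$, taking $Q=e^{2^{j+1}}$ already bounds the sum over the block $I_j=(e^{2^j},e^{2^{j+1}}]$ by $O(j+1)$.

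Two further points in your sketch need repair.

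\textbf{The local constraint.} The bound $\omega_A(p)+\omega_B(p)\le p$ is not available for the original sets.
\begin{itemize}
\item Which elements you must delete depends on $p$. No fixed cofinite pair $A'\subseteq A$, $B'\subseteq B$ works for all large $p$: every prime $p=a+b$ with $a\in A'$, $b\in B'$ gives a common class of $A'\bmod p$ and $-B'\bmod p$, and there are infinitely many such primes.
\item The residue counts do change. If $a+b=p$, then $a$ is the only element of $A$ in its class mod $p$, since any $a'\equiv a$ would give $p\mid a'+b$, hence $a'+b=p$.
\end{itemize}
The correct statement is $\omega_A(p)+\omega_B(p)\le p+r_p$, where $r_p:=|(A\bmod p)\cap(-B\bmod p)|\le\min\{A(p),B(p)\}\ll p/\log p$. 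The correction costs only $\sum_{p\le Q}r_p\log p/p^2\ll\log\log Q$.

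\textbf{The final Cauchy--Schwarz step.} You apply it to the whole tail $p>Z$, but $\sum_{p>Z}d_p^2\log p/p$ is not known to be finite, because the available bound $\log\log Q$ grows with $Q$. You have to apply Cauchy--Schwarz block by block. Using $\sum_{p\in I_j}1/(p\log p)\ll 2^{-j}$ gives $\sum_{p\in I_j}d_p/p\ll\sqrt{j+1}\,2^{-j/2}$. Then sum over $j\ge J$ with $2^J\asymp\log Z$.
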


\begin{proof}
For each prime $p$, set $A_p:=A\bmod{p}$, $B_p:=B\bmod{p}$, and $R_p:=A_p\cap (-B_p)$. Accordingly, define also 
$$
a_p:=|A_p|, \quad 
b_p:=|B_p|, \quad \text{ and }\quad 
r_p:=|R_p|.
$$
It follows that 
$$
a_p+b_p=|A_p|+|-B_p|=|A_p\cup (-B_p)|+|A_p\cap (-B_p)|\le p+r_p.
$$
In addition, if a residue class belongs to $R_p$, then
there exist $a\in A$ and $b\in B$ such that 
$
p\mid a+b.
$ 
Since
$
a+b
$ is prime, 
necessarily
$
p=a+b.
$ 
Moreover, the corresponding residue classes contain only one possible
element participating in such a representation. Indeed, if
$a'\in A$ and
$
a'\equiv a\bmod p,
$ 
then
$
p\mid a'+b, 
$ and with the same reasoning we get $a=a'$. 
The analogous assertion holds for $B$.
Hence $r_p\leq\min\{A(p),B(p)\}$. 
Now, fix $b_0\in B$. Since 
$
a+b_0\in\mathbb{P}$ for all $a \in A$, 
the map $
a\mapsto a+b_0
$ 
is injective. Thus, for all sufficiently large $p$, 
$
A(p)
\leq
\pi(p+b_0)
\ll p/\log p, 
$ 
and similarly 
$
B(p)\ll p/\log p.
$ 
It follows that
\begin{equation}\label{eq:rp-small}
r_p\ll\frac{p}{\log p}
\end{equation}
for all sufficiently large primes $p$.

Fix a large parameter $Q$ and put
$$
N:=
\left\lceil
Q^2(\log Q)^8
\right\rceil,
$$
so that, by hypothesis, 
$
A(N),B(N)\gg Q(\log Q)^2.
$ 
Now, apply Gallagher's larger-sieve calculation used in
\cite[Section 7]{MR3378834} (see \cite[Lemma 3.3]{MR3378834} for the statement)
to the finite sets 
$
A\cap[0,N]$ and $
B\cap[0,N]$, with the primes $p\leq Q$ as sieving primes. In such case, we obtain 
$$
\sum_{p\leq Q}\frac{\log p}{a_p}
\leq
\sum_{p\leq Q}\frac{\log p}{(|A\cap [0,N]) \bmod{p}\,|}
\leq
\left(1-\frac1{A(N)}\right)\log N
+\frac{\vartheta(Q)}{A(N)},
$$
where $\vartheta(Q):=\sum_{p\le Q}\log p$. Note that $\vartheta(Q)\ll Q$. Summing it with the analogous estimate for $B\cap [0,N]$ and ignoring $-\log(N)/A(N)$, it follows that 
\begin{equation}\label{eq:larger-sieve-A}
    \begin{split}
        \sum_{p\leq Q}
\left(
\frac1{a_p}+\frac1{b_p}
\right)\log p
&\leq
2\log N
+\vartheta(Q)
\left(
\frac1{A(N)}+\frac1{B(N)}
\right)\\
&\le 4\log Q+16\log\log Q+O\left(\frac{Q}{Q(\log Q)^{2}}\right)\\
&\le 4\log Q+O(\log\log Q).
    \end{split}
\end{equation}

At this point, for every prime $p$, define
$$
D_p:=
\frac1{a_p}+\frac1{b_p}-\frac4{p+r_p}.
$$
It follows by the harmonic-arithmetic mean
inequality and the previous observation $a_p+b_p\le p+r_p$ that $D_p\ge 0$. 
Moreover, by \eqref{eq:rp-small}, we have
\begin{align*}
0\le \sum_{p\leq Q}
\left(
\frac1p-\frac1{p+r_p}
\right)\log p
\leq
\sum_{p\leq Q}\frac{r_p\log p}{p^2}
\ll
O(1)+\sum_{p\leq Q}\frac1p\ll
\log\log Q.
\end{align*}
Therefore,
using the classical estimate
$$
\sum_{p\leq Q}\frac{\log p}{p}
=
\log Q+O(1),
$$
we obtain
\begin{align}
\sum_{p\leq Q}\frac{\log p}{p+r_p}
&=
\sum_{p\leq Q}\frac{\log p}{p}
-
\sum_{p\leq Q}
\left(
\frac1p-\frac1{p+r_p}
\right)\log p
\notag\\
&\geq
\log Q-O(\log\log Q).
\label{eq:prp-lower}
\end{align}
Consequently, by \eqref{eq:larger-sieve-A} and
\eqref{eq:prp-lower},
\begin{equation}\label{eq:Dp-sum}
\begin{split}
\sum_{p\leq Q}D_p\log p
&=
\sum_{p\leq Q}
\left(
\frac1{a_p}+\frac1{b_p}
\right)\log p
-
4\sum_{p\leq Q}\frac{\log p}{p+r_p}
\\
&\leq
4\log Q+O(\log\log Q)
-
4\bigl(\log Q-O(\log\log Q)\bigr)
\\
&\ll
\log\log Q.
\end{split}
\end{equation}

We now relate $D_p$ to the deviations of $a_p$ and $b_p$ below
$p/2$. Suppose first that $a_p<p/2$, and put $s:=p+r_p$. Since $b_p\leq s-a_p$, we have
\begin{align*}
D_p
&\geq
\frac1{a_p}+\frac1{s-a_p}-\frac4s
=
\frac{(s-2a_p)^2}{s\,a_p(s-a_p)}.
\end{align*}
Recalling that
$d_{A,p}=(1/2-a_p/p)_+$, we have
$s-2a_p\geq p-2a_p=2p\,d_{A,p}$. Moreover, since
$r_p\leq p$, we have $s\leq2p$, while
$a_p(s-a_p)\leq s^2/4$. Hence
\[
s\,a_p(s-a_p)\leq\frac{s^3}{4}\leq2p^3,
\]
and consequently 
$
D_p\geq 2d_{A,p}^2/p.
$ 
If $a_p\geq p/2$, then $d_{A,p}=0$, so the same inequality holds
trivially. By symmetry, we have also 
$
D_p\geq 2d_{B,p}^2/p.
$ 
It follows that
\[
D_p\geq 
\max\left\{\frac{2d_{A,p}^2}{p}, \frac{2d_{B,p}^2}{p}\right\}\geq
\frac{d_{A,p}^2+d_{B,p}^2}{p}.
\]
Thus, we obtain by \eqref{eq:Dp-sum} that 
\begin{equation}\label{eq:L2-defect}
\sum_{p\leq Q}
\left(d_{A,p}^2+d_{B,p}^2\right)
\frac{\log p}{p}
\ll
\log\log Q.
\end{equation}

Notice that the quantities $d_{A,p}$ and $d_{B,p}$ depend only on
the fixed sets $A$ and $B$, and not on the auxiliary parameter $Q$.

For each $j\geq1$, define
\[
I_j:=\{p\in \mathbb{P}:e^{2^j}<p\leq e^{2^{j+1}}\}.
\]
Applying \eqref{eq:L2-defect} with $Q=e^{2^{j+1}}$, we obtain
\[
\sum_{p\in I_j}
d_{A,p}^2\frac{\log p}{p}
\ll j+1,
\]
and similarly for $B$. Moreover, partial summation together with
Mertens' estimate gives
\[
\sum_{p\in I_j}\frac1{p\log p}\ll \frac{1}{2^{j}}.
\]
Hence, by the Cauchy--Schwarz inequality,
\begin{align*}
\sum_{p\in I_j}\frac{d_{A,p}}p
\leq
\left(
\sum_{p\in I_j}
d_{A,p}^2\frac{\log p}{p}
\right)^{1/2}
\left(
\sum_{p\in I_j}
\frac1{p\log p}
\right)^{1/2}
\ll
\frac{\sqrt{j+1}}{2^{j/2}},
\end{align*}
and the same estimate holds with $A$ replaced by $B$.

Finally, let $J$ be such that
$e^{2^J}<Z\leq e^{2^{J+1}}$. Then
$2^J\asymp\log Z$ and $J\asymp\log\log Z$. Therefore
\begin{align*}
\sum_{p>Z}\frac{d_{A,p}+d_{B,p}}p
\ll
\sum_{j\geq J}\frac{\sqrt{j+1}}{2^{j/2}}
\ll
\frac{\sqrt{J+1}}{2^{J/2}}
\ll
\sqrt{\frac{\log\log Z}{\log Z}}.
\end{align*}
This concludes the proof. 
\end{proof}

\begin{lem}\label{lem:squarefree-mean-value}
Let $f$ be a multiplicative function supported on the squarefree
integers and satisfying 
$
0\leq f(p)\leq1
$ 
for every prime $p$. Suppose also that
$$
\sum_{p\in \mathbb{P}}\frac{1-f(p)}p<\infty.
$$
Then
$$%\begin{equation}\label{eq:squarefree-mean-value}
\lim_{x\to\infty}\frac{1}{x}\sum_{n\leq x}f(n)=
%\alpha_f:=
\prod_{p\in \mathbb{P}}
\left(1-\frac1p\right)
\left(1+\frac{f(p)}p\right)>0. 
$$%\end{equation}
\end{lem}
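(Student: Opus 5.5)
The plan is the standard convolution argument of Wintner type. Write $f=1*g$, that is, $g=\mu*f$. Since $f$ is multiplicative and supported on squarefree integers, $g$ is multiplicative with
$$
g(p)=f(p)-1,\qquad g(p^2)=-f(p),\qquad g(p^k)=0\quad (k\ge 3).
$$
The claim is that $\sum_{n}|g(n)|/n<\infty$. Indeed, the Euler product
$$
\prod_p\Bigl(1+\frac{1-f(p)}{p}+\frac{f(p)}{p^2}\Bigr)
$$
converges, because $\sum_p(1-f(p))/p<\infty$ by hypothesis and $\sum_p f(p)/p^2\le\sum_p p^{-2}<\infty$. As $|g|$ is multiplicative and nonnegative, the partial sums of $\sum_n |g(n)|/n$ are bounded by this product.

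Next I compute the mean. We have
$$
\sum_{n\le x}f(n)=\sum_{d\le x}g(d)\Bigl\lfloor\frac{x}{d}\Bigr\rfloor=x\sum_{d\le x}\frac{g(d)}{d}+O\Bigl(\sum_{d\le x}|g(d)|\Bigr).
$$
Dividing by $x$, the first term tends to $\sum_d g(d)/d$ by absolute convergence. For the error term I use the standard Kronecker-type fact: if $\sum_d |g(d)|/d<\infty$, then $\frac1x\sum_{d\le x}|g(d)|\to0$. To see this, split at $d\le\sqrt{x}$ and $d>\sqrt{x}$. The first range contributes at most $\frac{1}{\sqrt x}\sum_d|g(d)|/d$. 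In the second range $d/x\le 1$, so it contributes at most $\sum_{d>\sqrt x}|g(d)|/d\to0$.

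So the limit exists and equals, by absolute convergence and multiplicativity,
$$
\sum_d\frac{g(d)}{d}=\prod_p\Bigl(1+\frac{f(p)-1}{p}-\frac{f(p)}{p^2}\Bigr)=\prod_p\Bigl(1-\frac1p\Bigr)\Bigl(1+\frac{f(p)}{p}\Bigr),
$$
where the last equality is the direct expansion $(1-1/p)(1+f(p)/p)=1+(f(p)-1)/p-f(p)/p^2$.

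The main point needing care is positivity of the product. Each factor is at least $(1-1/p)(1+0)>0$, so no factor vanishes. The product converges to a nonzero limit because $\sum_p\bigl|(f(p)-1)/p-f(p)/p^2\bigr|<\infty$ by the same two estimates used above, and every factor is positive; an absolutely convergent product of nonzero factors is nonzero.
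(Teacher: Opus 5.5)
Your proof is correct, but it is not what the paper does. The paper gives no argument and simply quotes \cite[Theorem 2]{MR169829}, a general mean-value theorem for multiplicative functions. Theorems of that kind (Delange's theorem) apply to complex-valued multiplicative $f$ with $|f|\le 1$ and arbitrary values on higher prime powers. They require only that $\sum_p (1-f(p))/p$ converge, possibly conditionally. In that generality $\sum_p|1-f(p)|/p$ can diverge, so the elementary convolution method is not enough, and the cited result is genuinely deeper.

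You instead exploit the special features of the lemma. Because $0\le f(p)\le 1$, the terms $(1-f(p))/p$ are nonnegative, so the hypothesis already gives absolute convergence. The squarefree support makes $g=\mu*f$ supported on cube-free integers, with $|g(p)|=1-f(p)$ and $|g(p^2)|\le 1$. Hence $\sum_n |g(n)|/n<\infty$, and Wintner's argument yields the limit and the Euler-product formula: write $f=1*g$ and use your Kronecker-type estimate for $\frac1x\sum_{d\le x}|g(d)|$.

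Your route buys a short, self-contained proof and an explicit check that the product is positive: each factor is at least $1-1/p$ and the product converges absolutely, a point the paper leaves implicit. The citation buys brevity and a generality that the application in Theorem~\ref{thm:main2} does not actually need.
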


\begin{proof}
It follows as a consequence of \cite[Theorem 2]{MR169829}. 
\end{proof}

We can now prove the first main result.

\begin{proof}[Proof of Theorem \ref{thm:main2}]
Fix $A,B\subseteq \mathbb{N}$ with cardinality at least $2$, and suppose that
$A+B=_\star\mathbb P$. By \cite[Theorem 2.6]{MR3378834}, we have
\begin{equation}\label{eq:necforlemma}
A(x),B(x)
\gg
\frac{\sqrt{x}}{\log x\log\log x}
\gg
\frac{\sqrt{x}}{(\log x)^2}
\quad \text{ as }x\to \infty.
\end{equation}
Since $(A+B)\setminus\mathbb P$ is finite, there exist cofinite subsets
$A_0\subseteq A$ and $B_0\subseteq B$ such that
\[
A_0+B_0\subseteq\mathbb P.
\]
Then $A_0(x)=A(x)+O_{A,B}(1) \gg \sqrt{x}/(\log x)^2$ as $x\to \infty$, and similarly for $B$. In the following, we shall prove the required estimates for $A_0$ and $B_0$.
Since they are cofinite subsets of $A$ and $B$, respectively, the same
estimate will then hold for $A$ and $B$. Hence, for notational simplicity, we assume hereafter that $A=A_0$ and $B=B_0$

For each prime $p$, define $A_p$, $a_p$, $d_{A,p}$ (and similarly for $B$) as in the proof of Lemma \ref{lem:tail-defect}. 
Note that $1\le a_p\le p-1$: in fact, if $a_p=p$, it is possible to choose $b \in B$ with $b>p$ and $a \in A$ such that $a\equiv -b\bmod{p}$. But this would be impossible because $p\mid a+b$, $a+b$ is prime, and $a+b\ge b>p$.

Define a multiplicative function $f$, supported on the squarefree
integers, such that 
\begin{equation}\label{eq:def-fA}
\forall p \in \mathbb{P}, \qquad 
f(p):=
\min
\left\{
1,\frac{a_p}{p-a_p}
\right\}.
\end{equation}
If 
$
a_p\geq p/2,
$ 
then 
$
f(p)=1.
$ 
If 
$
a_p<p/2,
$ 
then
\begin{align*}
1-f(p)
=
\frac{p-2a_p}{p-a_p}
=\frac{2\left(\frac{1}{2}-\frac{a_p}{p}\right)}{1-\frac{a_p}{p}}
\leq
4\left(
\frac12-\frac{a_p}{p}
\right)
=
4d_{A,p}.
\end{align*}
Hence, taking also into account \eqref{eq:necforlemma}, Lemma \ref{lem:tail-defect} gives
$$%\begin{equation}\label{eq:fA-summability}
\sum_{p \in \mathbb{P}}\frac{1-f(p)}p
\ll 
\sum_{p \in \mathbb{P}}\frac{d_{A,p}}p
<\infty.
$$%\end{equation}

Applying Lemma \ref{lem:squarefree-mean-value}, we obtain
\begin{equation}\label{eq:fA-positive-mean}
\sum_{n\leq y}f(n)
\sim
\alpha y 
\quad \text{ as }y\to \infty, 
\end{equation}
where
$$%\begin{equation}\label{eq:def-cA}
\alpha
:=
\prod_{p\in \mathbb{P}}
\left(1-\frac1p\right)
\left(1+\frac{f(p)}p\right)>0.
$$%\end{equation}
Notice that the value of $\alpha$ depends on the choice of the pair $(A,B)$.

We now use the Montgomery's large-sieve as in the proof of (see \cite[Lemma 3.2]{MR3378834} for the statement). 
To this aim, define
$$
y:=\lfloor\sqrt{x}\rfloor. 
%\quad \text{ and }\quad 
%B^\flat:=B\cap(y,x].
$$
For every prime $p\leq y$, the set $B\cap(y,x]$ avoids all residue classes in $-A_p$ (for the same reasoning above). 
Thus, $|B\cap(y,x] \bmod{p}| \le 
p-a_p$. Therefore the large-sieve estimate gives
\begin{align*}
|B\cap(y,x]|
\ll
\frac{x+y^2}
{\displaystyle
\sum_{\substack{q\leq y\\q\ {\rm squarefree}}}
\prod_{p\mid q}
\frac{a_p}{p-a_p}}
\leq
\frac{x+y^2}
{\displaystyle
\sum_{q\leq y}f(q)}.
\end{align*}
By \eqref{eq:fA-positive-mean}, we have 
$
\sum_{q\leq y}f(q)
\gg_{A,B}
y,
$ 
and hence
\begin{equation}\label{eq:estimB}
B(x)\le 
y+|B\cap(y,x]|\ll_{A,B}
y+\frac{x+y^2}{y}
\ll_{A,B}
\sqrt{x}
\quad \text{ as }x\to \infty.
\end{equation}
Interchanging $A$ and $B$, we obtain also $A(x)
\ll_{A,B}
\sqrt{x}$ as $x\to \infty$. 

To complete the proof, observe that every sufficiently large prime $p\le x$ can be represented as $a+b$ for some $a \in A$ and $b \in B$ with $a,b\le x$. It follows from the upper bound \eqref{eq:estimB} (and the classical estimates on $\pi(x):=\mathbb{P}(x)$) that 
$$
\frac{x}{\log x}\ll \pi(x)-O_{A,B}(1) \le A(x)B(x) \ll_{A,B} \, A(x)\sqrt{x}
\quad 
\text{ as }x\to \infty. 
$$
Therefore $A(x) \gg_{A,B} \sqrt{x}/\log x$ as $x\to \infty$, and analogously for $B$. 
\end{proof}

\begin{comment}
\begin{rmk}\label{rmk:comparison-EH-main2}
The proof of Theorem \ref{thm:main2} uses the same two sieve-theoretic
ingredients as the proof of \cite[Theorem 2.6]{MR3378834}: the
larger sieve is used to control the residue supports modulo primes,
while the final upper bound is obtained from the large sieve.
For completeness, the required larger-sieve estimate was derived
directly in \eqref{eq:elementary-larger-sieve}; the final large-sieve
step was quoted in the precise form needed from \cite[Lemma
3.2]{MR3378834}.

The difference is in the use of the residue information between these
two sieve steps. In \cite[Section 7]{MR3378834}, the argument is
performed at one scale and yields a quadratic estimate for the
deviations of the residue supports from $p/2$. This is then combined
with Hildebrand's theorem at the same scale, producing the factor
$\log\log x$ in \eqref{eq:startingbound}.

Here the sets $A$ and $B$ are fixed infinite sets. Hence the residue
supports
$$
A\bmod p
\qquad\text{and}\qquad
B\bmod p
$$
are fixed while the auxiliary parameter $Q$ tends to infinity.
This allows us to apply the same larger-sieve estimate at
arbitrarily many scales and to deduce the summability
$$
\sum_p
\frac{d_{A,p}+d_{B,p}}p
<\infty.
$$
The resulting multiplicative function $f_A$ therefore has a positive
mean value
$$
\sum_{n\leq X}f_A(n)
\sim
\mathfrak c_A X,
$$
rather than a lower bound losing a factor $\log\log X$. This removes
the latter factor from the final large-sieve estimate and gives
$$
A(x),B(x)\ll_{A,B}\sqrt{x}.
$$
\end{rmk}
\end{comment}

\medskip

\begin{proof}
[Proof of Corollary \ref{cor:kappa}]
Thanks to Theorem \ref{thm:main2}, there exist $c_1,c_2>0$, depending
only on $A$ and $B$, such that
$$
c_1\frac{\sqrt{x}}{\log x}\leq A(x),B(x)\leq c_2\sqrt{x} 
\quad \text{ as }x\to \infty.
$$
Since $\kappa(x)\to\infty$, we have
$\kappa(x)\geq\max\{1/c_1,c_2\}$ for all sufficiently large $x$, and
the assertion follows.
\end{proof}

\section{Proof of Theorem \ref{thm:main3} and Corollary \ref{cor:powers}}\label{sec:proof-main3}

We begin with two auxiliary sieve estimates. 

\begin{lem}\label{lem:many-shifts-main3}
There exist absolute constants $c_0,C_0>0$ such that, for all
sufficiently large $x$, the following assertion holds. If
$$
1\leq h\leq c_0\,\frac{x^{1/4}}{\log x}
$$
and $a_1,\ldots,a_h$ are distinct integers in $[0,x/4]$, then
\begin{equation}\label{eq:many-shifts-main3}
\left|
\left\{
n\in[2x^{1/4},x/4]:
n+a_i\in\mathbb P
\text{ for each }i \in \{1,\ldots,h\}
\right\}
\right|
\leq
C_0\,\frac{x(\log x)^2}{h^2}.
\end{equation}
\end{lem}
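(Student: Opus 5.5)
The plan is to sieve the set
$$
\mathcal N:=\{n\in[2x^{1/4},x/4]: n+a_i\in\mathbb P \text{ for all } i\}
$$
by the primes $p\le Q$, with $Q:=x^{1/4}$. We apply Montgomery's large sieve in the form of \cite[Lemma 3.2]{MR3378834}. This gives
$$
|\mathcal N|\ll \frac{x+Q^2}{L}, \qquad L:=\sum_{q\le Q}\mu^2(q)\prod_{p\mid q}\frac{\omega(p)}{p-\omega(p)},
$$
where $\omega(p)$ is the number of residue classes mod $p$ that $\mathcal N$ avoids. Every $n\in\mathcal N$ satisfies $n+a_i\ge 2x^{1/4}>p$. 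So if $n+a_i$ is prime, then $p\nmid n+a_i$. Hence $\mathcal N$ avoids the classes $-a_1,\dots,-a_h \bmod p$, and $\omega(p)=|\{a_1,\dots,a_h\}\bmod p|$.

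The key step is a lower bound $L\gg h^2/(\log x)^2$, which gives the claimed bound since $x+Q^2\ll x$. The only sources of loss are the primes $p$ at which the $a_i$ collide. We have
$$
\omega(p)\ge h-\#\{(i,j): i<j,\ p\mid a_i-a_j\}.
$$
Each nonzero difference $a_i-a_j$ has at most $\log x/\log 2$ prime factors. So the total collision count over all primes is at most $h^2\log x$.

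We work with primes $p\in(P,2P]$ for some $P$ with $h\log x\ll P\le Q$. Such $P$ exist because $h\le c_0x^{1/4}/\log x$; this is where the restriction on $h$ is used. In this range a suitable $P$ is $P\asymp c\,h\log x$, scaled so that $h\le P\le Q$. We call $p$ good if $\omega(p)\ge h/2$. The number of bad primes is at most $2h\log x$, because each bad prime carries at least $h/2$ collisions. This is fewer than half of the roughly $P/\log P$ primes in $(P,2P]$, provided $P\gg h(\log x)^2$. I would therefore take $P\asymp h(\log x)^2$, which is still $\le Q$ when $c_0$ is small. Note that a bound on $h$ like $x^{1/4}/(\log x)^2$ might be forced here; I would adjust the exponents if needed. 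With this choice, $(P,2P]$ contains $\gg P/\log P$ good primes.

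To bound $L$, we keep only the terms $q=p$ with $p$ good. For such $p$ we have $\omega(p)/(p-\omega(p))\ge h/(2p)$. Summing over the good primes gives
$$
L\ge \sum_{p \text{ good}}\frac{h}{2p}\gg \frac{h}{\log P}.
$$
This bound is only linear in $h$, so the single-prime terms are not enough to reach $h^2/(\log x)^2$.

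To gain the second factor of $h$, I would also include products $q=p_1p_2$ of two good primes. These give terms $\asymp h^2/(p_1p_2)$, and summing over pairs yields
$$
L\gg h^2\Bigl(\sum_{p\text{ good}} 1/p\Bigr)^2\gg h^2/(\log P)^2.
$$
This requires $4P^2\le Q$, i.e. $P\le x^{1/8}/2$. For larger $h$, one instead uses the simpler range $P\asymp h\log x$. There $\omega(p)\asymp p$ is comparable to $h$, and the terms $\omega/(p-\omega)$ are $\gg 1$ at good primes, while the bad primes are handled by the same collision count. In that regime, I would alternatively bound $L$ from below by using $\omega(p)$ close to $p$ for $p\le h$ with few collisions. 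Balancing these two regimes to get exactly $(\log x)^2/h^2$ uniformly in $h$ is the main obstacle.

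A cleaner alternative I would try first is Gallagher's larger sieve, \cite[Lemma 3.3]{MR3378834}, applied to the shifts $\{a_i\}$ rather than to $\mathcal N$. We can assume $|\mathcal N|$ is large and fix it. Each $p\le Q$ then divides no $n+a_i$, so the set $\{n+a_i\}$ interacts with the set $\mathcal N$ as in the proof of Lemma~\ref{lem:tail-defect}: the residues $\mathcal N_p$ and $-\{a_i\}_p$ are disjoint, so $|\mathcal N_p|+\omega(p)\le p$. Combining the large sieve bound for $\mathcal N$ with the larger sieve bound for $\{a_i\}$, which forces $\omega(p)$ to be typically large, and using the harmonic-arithmetic mean inequality, should give the factor $h^2$ with $(\log x)^2$ loss.
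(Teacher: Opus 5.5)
\paragraph{Gap: the argument covers only part of the range of $h$.}
Your mechanism is the right one: a large sieve with moduli $p_1p_2$, built from primes at which the shifts $a_i$ are nearly distinct. It is essentially what sits inside \cite[Lemma 4.4]{MR3378834}, which the paper simply invokes with $y_1=x^{1/4}/4$, $y_2=x^{1/4}$ after the translation $s=n+\lfloor x/4\rfloor$. But as you set it up, the argument proves the bound only for $h\ll x^{1/8}/(\log x)^2$. The rest of the range $h\le c_0x^{1/4}/\log x$ is left open.

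Two self-imposed restrictions cause this.
\begin{enumerate}
\item You take the large-sieve parameter equal to the sifting limit, $Q=x^{1/4}$. Then $p_1p_2\le Q$ forces $p_i\le x^{1/8}$.
\item You count collisions using \emph{all} prime factors of $a_i-a_j$. This forces $P\gg h(\log x)^2$, which near the top of the range already exceeds $x^{1/4}$.
\end{enumerate}
Your fallbacks do not close the gap. For $p\asymp h\log x$ and $\omega(p)\ge h/2$, the term $\omega(p)/(p-\omega(p))$ is $\asymp 1/\log x$, not $\gg 1$. For $p\lesssim h$ nothing controls $p-\omega(p)$. The larger-sieve sketch only shows that $\omega(p)\gtrsim h$ on average, which your collision count already gives. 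It does not by itself produce the factor $h^2$; for that you still need two-prime moduli.

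\paragraph{How to repair it.}
Both restrictions are unnecessary. The bound $(N+Q^2)/L$ holds with $Q=\sqrt{x}/4$ at no cost: set $\omega(p)=0$ at primes where you have no information. Only the \emph{sifting} primes need $p<2x^{1/4}\le n$.

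So take $q=p_1p_2$ with $p_1\in(x^{1/4}/8,x^{1/4}/4]$ and $p_2\in(x^{1/4}/2,x^{1/4}]$. These ranges are disjoint and $q\le\sqrt{x}/4$. Count collisions only among primes in each range. For large $x$, a nonzero integer of size at most $x/4$ has at most $4$ prime factors exceeding $x^{1/4}/8$. Hence each range contains fewer than $4h$ primes with $\omega(p)<h/2$, out of $\gg x^{1/4}/\log x$ primes.

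The hypothesis $h\le c_0x^{1/4}/\log x$ is exactly what makes the good primes a majority. It is the condition $\sum_{y_i/2<p\le y_i}\log p\ge 8h\log x$ checked in the paper. Then
\[
L\ \ge\ \frac{h^2}{4}\Bigl(\sum_{p_1\ \mathrm{good}}\frac{1}{p_1}\Bigr)\Bigl(\sum_{p_2\ \mathrm{good}}\frac{1}{p_2}\Bigr)\ \gg\ \frac{h^2}{(\log x)^2}
\]
uniformly in $1\le h\le c_0x^{1/4}/\log x$. This gives the lemma with no regimes to balance.
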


\begin{proof}
We apply \cite[Lemma 4.4]{MR3378834} with 
$$
\mathcal P_0=\mathbb{P}, \qquad 
y_1:=\frac{x^{1/4}}4, 
\quad\text{and}\quad
y_2:=x^{1/4}.
$$
For all sufficiently large $x$, we have
$$
y_1<\frac{y_2}{2}<y_2<\sqrt{\frac{x}{y_1}}.
$$
Moreover, by the prime number theorem, 
$\sum_{y_i/2<p\leq y_i}\log p\asymp y_i$ for all $i \in \{1,2\}$. 
Thus, choosing the absolute constant $c_0>0$ sufficiently small, we
have
$$
\sum_{y_1/2<p\leq y_1}\log p\geq8h\log x
\quad \text{ and }\quad 
\sum_{y_2/2<p\leq y_2}\log p\geq8h\log x
$$
whenever $h\leq c_0x^{1/4}/\log x$.

There is only a minor translation to make, since the forbidden residue
classes in our application are $-a_1,\ldots,-a_h$. Let
$y:=\lfloor x/4\rfloor$ and write $s=n+y$. Then the corresponding
shifts 
$
y-a_1,\ldots,y-a_h
$ 
belong to $[0,y]$. If $n\geq2x^{1/4}$, $p\leq y_2=x^{1/4}$, and $n+a_i$ is
prime, then $p<n+a_i$, hence $p\nmid n+a_i$. Hence $n\not\equiv -a_i\bmod{p}$ implies 
$$
s\not\equiv y-a_i\bmod p.
$$
Therefore the translated candidate set is contained in the sifted set
to which \cite[Lemma 4.4]{MR3378834} applies. Therefore that lemma gives that the required cardinality is at most 
$$
\frac{128x\log y_1\log y_2}{h^2}
\ll
\frac{x(\log x)^2}{h^2},
$$
which concludes the proof.
\end{proof}

We shall also use the following well-known application of Selberg’s sieve upper bounds the number of $k$-tuples of primes, for fixed $k$, as in \cite[Lemma 2.3]{MR4458542}. 
\begin{lem}\label{lem:fixed-shifts-main3}
Fix distinct $a_1,\ldots,a_k \in \mathbb{N}$ with $k\ge 2$. 
Then
$$
\left|
\left\{
n\in [0,x]:\ 
n+a_i\in\mathbb P
\text{ for each }i\in \{1,\ldots, k\}
\right\}
\right|
\ll_{a_1,\ldots,a_k}
\frac{x}{(\log x)^k}
$$
as $x\to\infty$.
\end{lem}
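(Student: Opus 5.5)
This is the standard Selberg (or Brun) sieve upper bound for prime $k$-tuples, and I would prove it in that form. The idea is to sieve the interval $[0,x]$ by the primes $p\le z$, where $z:=x^{\delta}$ and $\delta=\delta(k)>0$ is small.

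\textbf{Step 1 (reduction to a sifted set).} For each prime $p$, let $\omega(p):=|\{-a_1,\ldots,-a_k\}\bmod p|$. Set $M:=\max_i a_i$. If $n>z$ and every $n+a_i$ is prime, then for each $p\le z$ we have $n+a_i>p$, so $p\nmid n+a_i$. Hence $n\bmod p$ avoids the $\omega(p)$ classes $-a_i\bmod p$. Therefore the quantity in the statement is at most
$$
z+S\bigl([0,x],\{\omega(p)\}_{p\le z}\bigr),
$$
the number of $n\le x$ avoiding these classes for all $p\le z$. Note that $\omega(p)\le k$ always. Moreover $\omega(p)=k$ for every prime $p\nmid \Delta:=\prod_{i<j}(a_j-a_i)$, and $\Delta\neq0$ depends only on the $a_i$.

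\textbf{Step 2 (admissibility).} If $\omega(p)=p$ for some $p$, then for every $n$ some $n+a_i$ is divisible by $p$. For $n>p$ that $n+a_i$ cannot be prime, so the set in question is finite and the bound is trivial. We may therefore assume $\omega(p)<p$ for all $p$. We also have $\omega(p)\le k<p$ for all $p>k$, so the sieve dimension is $k$.

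\textbf{Step 3 (Selberg's upper bound sieve).} Apply Selberg's sieve with level $D=z^2$; alternatively, apply the large sieve as in Theorem \ref{thm:main2}. This gives
$$
S\ll \frac{x}{G(z)}+\sum_{d\le z^2,\ \mu^2(d)=1} 3^{\omega(d)}k^{\omega(d)},
\qquad
G(z)=\sum_{\substack{d\le z\\ \mu^2(d)=1}}\prod_{p\mid d}\frac{\omega(p)}{p-\omega(p)}.
$$
The error term is $\ll z^2(\log z)^{C_k}\le x^{1/2}$ for $\delta$ small.

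\textbf{Step 4 (the main term).} The remaining step, and the main one, is the lower bound $G(z)\gg_{a_1,\ldots,a_k}(\log z)^k$. Write $g(p)=\omega(p)/(p-\omega(p))$ and restrict $d$ to squarefree integers coprime to $\Delta\cdot k!$. On these $d$ we have $g(p)\ge k/p$, so
$$
G(z)\ge\sum_{\substack{d\le z\\(d,\Delta k!)=1}}\mu^2(d)\frac{k^{\omega(d)}}{d}.
$$
This sum is $\gg_{\Delta,k}(\log z)^k$. One way to see it is the standard Rankin/Dirichlet series comparison with $\zeta(s)^k$. A more elementary route is to bound it below by
$$
\frac{1}{k!}\Bigl(\sum_{p\le z^{1/k},\,p\nmid\Delta k!}\frac1p\Bigr)^{k}
$$
minus the terms with repeated primes. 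By Mertens this main part is $\gg(\log\log z)^k$, which is not enough. So I would instead use the comparison $\sum_{d\le z}\mu^2(d)k^{\omega(d)}/d\ge\sum_{m\le z}\tau_k(m)/m\cdot(\text{correction})$, or simply cite Halberstam--Richert, Theorem 5.1. This step is where care is needed; the Euler product $\prod_{p\le z}(1+k/p)\asymp(\log z)^k$ handles it once one truncates via Rankin's trick.

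With $\log z=\delta\log x$, combining the steps gives
$$
S\ll_{a_1,\ldots,a_k} \frac{x}{(\log x)^k}+x^{1/2},
$$
and adding the $z$ from Step 1 yields the claim.
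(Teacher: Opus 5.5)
Your Steps 1--3 are correct; the large-sieve variant is even cleaner, giving $S\le (x+z^2)/G(z)$ with no remainder sum. This is exactly the Selberg-sieve argument that the paper imports by citing a textbook theorem, so the route is the same. The gap is in Step 4, the only step with real content. You list options but carry none of them out, and the one you sketch fails as written. Rankin's trick applied to the product over \emph{all} $p\le z$ cannot work, because there the tail $d>z$ is the dominant part. Indeed $\sum_{d\le z}\mu^2(d)k^{\omega(d)}/d\sim c_k(\log z)^k/k!$, while $\prod_{p\le z}(1+k/p)\sim c_k(e^{\gamma}\log z)^k$, so the truncated sum is only a proportion about $1/(k!\,e^{\gamma k})<1$ of the Euler product. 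Rankin's trick succeeds only after restricting to primes $p\le z^{\eta}$ with $\eta=\eta(k)$ small. Writing $P:=\Delta\cdot k!$ and taking $\epsilon=1/(\eta\log z)$, one gets
\[
\sum_{\substack{d\le z,\ \mu^2(d)=1\\ p\mid d\Rightarrow p\le z^{\eta},\,p\nmid P}}\frac{k^{\omega(d)}}{d}
\;\ge\;\bigl(1-e^{-1/\eta}e^{O(k)}\bigr)\prod_{\substack{p\le z^{\eta}\\ p\nmid P}}\Bigl(1+\frac kp\Bigr)\gg_{P,k}(\log z)^k .
\]

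A simpler repair is the elementary route you abandoned, done without discarding information. Do not weaken $k/(p-k)$ to $k/p$; expand it as $\sum_{\nu\ge1}(k/p)^{\nu}$ instead. For squarefree $d$ coprime to $P$ this gives $\prod_{p\mid d}\frac{k}{p-k}=\sum_{\mathrm{rad}(m)=d}k^{\Omega(m)}/m$. Using $\tau_k(p^{\nu})=\binom{\nu+k-1}{k-1}\le k^{\nu}$, it follows that
\[
G(z)\ \ge\sum_{\substack{m\le z\\ (m,P)=1}}\frac{k^{\Omega(m)}}{m}\ \ge\sum_{\substack{m\le z\\ (m,P)=1}}\frac{\tau_k(m)}{m}\ \ge\Bigl(\sum_{\substack{n\le z^{1/k}\\ (n,P)=1}}\frac1n\Bigr)^{k}\gg_{P,k}(\log z)^k .
\]
Your bound $\frac1{k!}\bigl(\sum_{p}1/p\bigr)^k$ only captured $d$ with exactly $k$ prime factors, which is why it yielded only $(\log\log z)^k$. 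With either repair inserted, your proof is complete. Simply citing a textbook for this step, as you also suggest, would be legitimate, since the paper does exactly that for the whole lemma.
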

\begin{proof}
It follows by \cite[Theorem 7.16]{MR2647984}. 
\end{proof}

\begin{lem}\label{lem:robust-binary-main3}
Let $X,Y\subseteq\mathbb N$ have cardinality at least $2$, and define 
$$
m(x):=\left((X+Y)\bigtriangleup\mathbb P\right)(x)
$$ for all $x\ge 1$. 
Then, for all sufficiently large $x$ such that
\begin{equation}\label{eq:small-m-robust}
m(x)\leq\frac{x}{(\log x)^8},
\end{equation}
we have
$$%\begin{equation}\label{eq:robust-binary-main3}
X(x/4),Y(x/4)
\ll
\sqrt{x}(\log x)^4+
x^{1/3}\left(m(x)\log x\right)^{2/3}
\quad \text{ as }x\to \infty. 
$$
\end{lem}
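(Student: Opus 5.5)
By symmetry it suffices to bound $X(x/4)$; put $h:=X(x/4)$. Let $x_1<\dots<x_h$ be the elements of $X\cap[0,x/4]$. We will count elements $y\in Y\cap[2x^{1/4},x/4]$ according to how many of the sums $y+x_i$ are prime. Every sum $y+x_i$ lies in $[0,x/2]\subseteq[0,x]$ and belongs to $X+Y$. So if it is not prime, it is an element of $(X+Y)\setminus\mathbb P$ up to $x$, and there are at most $m(x)$ of these.

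\textbf{Main step (bounding $Y$ via Lemma~\ref{lem:many-shifts-main3}).} Put $H:=\min\{h,\lfloor c_0x^{1/4}/\log x\rfloor\}$ and keep only $x_1,\dots,x_H$. Split $Y':=Y\cap[2x^{1/4},x/4]$ into two parts.
\begin{itemize}
\item The \emph{good} $y$: those with $y+x_i\in\mathbb P$ for all $i\le H$. By Lemma~\ref{lem:many-shifts-main3} there are at most $C_0x(\log x)^2/H^2$ of them.
\item The \emph{bad} $y$: those with some $y+x_i\notin\mathbb P$. Each of the at most $m(x)$ composite elements $z$ of $(X+Y)\cap[0,x]$ arises from at most $H$ pairs $(i,y)$, since $y=z-x_i$. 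Hence there are at most $Hm(x)$ bad $y$.
\end{itemize}
This gives
\[
Y(x/4)\le 2x^{1/4}+\frac{C_0x(\log x)^2}{H^2}+Hm(x).
\]
To reach the stated shape I would instead use only $H'\le H$ of the shifts, with $H'$ a free parameter, and optimise $x(\log x)^2/H'^2+H'm(x)$. The optimum is at $H'\asymp (x(\log x)^2/m(x))^{1/3}$, which yields
\[
Y(x/4)\ll x^{1/3}\bigl(m(x)\log x\bigr)^{2/3}.
\]
This choice is admissible provided $H'\le H$. If instead $h$ is the binding constraint, i.e.\ $h<H'$, use all $h$ shifts to get $Y(x/4)\ll x(\log x)^2/h^2+hm(x)+x^{1/4}$.

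\textbf{Closing the argument.} The same reasoning with the roles of $X$ and $Y$ exchanged gives the symmetric inequality. To make it self-contained, I need a lower bound linking $h=X(x/4)$ and $Y(x/4)$. This comes from primes: every prime $p\le x/4$ that lies in $X+Y$ is $x_i+y_j$ with both summands $\le x/4$. Only at most $m(x)\le x/(\log x)^8$ primes up to $x$ are missing from $X+Y$. Therefore
\[
X(x/4)\,Y(x/4)\ge \pi(x/4)-m(x)\gg x/\log x.
\]
Now suppose $h\gg\sqrt x(\log x)^4$; otherwise there is nothing to prove. Then $Y(x/4)\ll \sqrt x$ from the case $h\ge H$ (where the bound is $\ll x(\log x)^4/x^{1/2}\cdot$ lower order). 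Combining this with the product lower bound and the reverse inequality bounds $h$ by the same expression, and I expect this to close.

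\textbf{Main obstacle.} The hard step is the case analysis balancing three effects: the cap $H\le c_0x^{1/4}/\log x$ in Lemma~\ref{lem:many-shifts-main3}, the optimisation in $H'$, and the product lower bound. In particular, the $x^{1/4}/\log x$ cap only yields $Y\ll \sqrt x(\log x)^4$, and that is exactly where this term in the statement comes from. Hypothesis \eqref{eq:small-m-robust} is used to keep $\pi(x/4)-m(x)\gg x/\log x$ and to guarantee $H'\ge1$. I would first check whether one can simply take $H'=\min\{h,H,(x(\log x)^2/m(x))^{1/3}\}$ and bound $Y(x/4)$ directly by the sum of the three resulting terms, then apply the same with $X,Y$ swapped, avoiding the product bound altogether if $h\ge H$ can be assumed WLOG.
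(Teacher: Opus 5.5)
\textbf{There is a genuine gap.} Your main step is correct and is the paper's final step. Take $h=\min\{h_0,h_1\}$ with $h_0=\lfloor (x(\log x)^2/m)^{1/3}\rfloor$ and $h_1=\lfloor c_0x^{1/4}/\log x\rfloor$, use $h$ elements of $X\cap[0,x/4]$ as shifts in Lemma \ref{lem:many-shifts-main3}, and charge each bad $y$ to one of at most $hm$ pairs. This gives $Y(x/4)\ll x^{1/4}+x(\log x)^2/h^2+hm\ll\sqrt{x}(\log x)^4+x^{1/3}(m\log x)^{2/3}$. But the step presupposes $X(x/4)\ge h$, and symmetrically $Y(x/4)\ge h$ for the bound on $X$. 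Your proposal never proves this. It cannot be assumed without loss of generality: if one set is sparse up to $x/4$, its shifts say nothing useful about the other set. Your ``closing'' paragraph is where the real work lies, and ``I expect this to close'' does not close it. (Also, the bound you get there is not $Y(x/4)\ll\sqrt{x}$.)

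\textbf{What your tools do and do not reach.} Write $u=X(x/4)$ and $v=Y(x/4)$. The product bound $uv\gg x/\log x$ together with Lemma \ref{lem:many-shifts-main3} covers only part of the range. If $u<h$, using all $u$ shifts gives $v\ll x^{1/4}+x(\log x)^2/u^2+um$, while $v\gg x/(u\log x)$. The first and third terms are $o(x/(u\log x))$, because $u<h\le h_0,h_1$ and $m\le x/(\log x)^8$. The middle term is $o(x/(u\log x))$ only when $u/(\log x)^3\to\infty$. For $u\le(\log x)^3$, the bound $x(\log x)^2/u^2$ is weaker than the information you already have, so no contradiction arises.

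\textbf{The missing ingredient.} You need the Selberg-sieve bound for a \emph{fixed} number of shifts, Lemma \ref{lem:fixed-shifts-main3}, which gives $x/(\log x)^r$ for $r$ shifts. The paper uses it in three steps:
\begin{enumerate}
\item It shows $X$ and $Y$ are infinite. If $X$ were finite with $r\ge2$ elements, then $v\ll_X x/(\log x)^r+rm\ll x/(\log x)^2$, contradicting $v\gg x/\log x$.
\item It fixes six elements of $X$ and shows $u>(\log x)^4$. Otherwise $v\gg x/(\log x)^5$, whereas $v\ll_X 6m+x/(\log x)^6$. Symmetrically $v>(\log x)^4$.
\item With $u,v>(\log x)^4$ in hand, the comparison above yields $u,v\ge h$, and your main step then finishes the proof.
\end{enumerate}
This is also where the hypothesis $m\le x/(\log x)^8$ does real work, in the $6m$ term and in the $um$ comparison. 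It is not merely there to make your $H'\ge1$.
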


\begin{proof}
Fix a sufficiently large $x$ satisfying \eqref{eq:small-m-robust},
and set
$$
m:=m(x),\qquad
u:=X(x/4),\qquad
v:=Y(x/4).
$$
All but at most $m$ primes $p\leq x/4$ belong to $X+Y$, which implies that 
\begin{equation}\label{eq:UV-product-main3}
uv\geq\pi(x/4)-m\gg\frac{x}{\log x}.
\end{equation}

First, observe that both $X$ and $Y$ are infinite. In fact, suppose for the sake of contradiction that 
$
X=\{a_1,\ldots,a_r\}
$ 
is finite. If $x$ is sufficiently large, each $a_i$ belongs to
$[0,x/4]$. Observe that, apart from at most $rm$ elements
$y\in Y\cap[0,x/4]$, all the integers 
$
y+a_1,\ldots,y+a_r
$ 
are prime.  
Hence Lemma
\ref{lem:fixed-shifts-main3} gives
$$
v=Y(x/4)
\ll_X
\frac{x}{(\log x)^r}+rm
\ll 
\frac{x}{(\log x)^2}.
$$
This is impossible if $x$ is sufficiently large because inequality \eqref{eq:UV-product-main3} implies that $v\gg_Xx/\log x$. Hence $X$ is infinite, and the same argument applies to $Y$.

Next, we claim that
\begin{equation}\label{eq:UV-log4}
u,v>(\log x)^4
\end{equation}
for all sufficiently large $x$ satisfying \eqref{eq:small-m-robust}. Choose six 
distinct integers $a_1,\ldots,a_6\in X$. For all sufficiently large
$x$, they belong to $[0,x/4]$. If $u\leq(\log x)^4$, then
\eqref{eq:UV-product-main3} implies 
$$
v\gg\frac{x}{(\log x)^5}.
$$
On the other hand, apart from at most $6m$ elements
$y\in Y\cap[0,x/4]$, all the integers
$y+a_1,\ldots,y+a_6$ are prime. Therefore, by Lemma
\ref{lem:fixed-shifts-main3}, 
$
v
\ll_X
6m+x/(\log x)^6
\ll x/(\log x)^6, 
$ 
which is impossible. Thus $u>(\log x)^4$, and symmetrically 
$v>(\log x)^4$. 

At this point, define 
$$
h_0:=
\left\lfloor
\left(
\frac{x(\log x)^2}{m}
\right)^{1/3}
\right\rfloor,
\quad 
h_1:=
\left\lfloor
c_0\,\frac{x^{1/4}}{\log x}
\right\rfloor,
\quad \text{ and }\quad 
h:=\min\{h_0, h_1\},
$$
where $c_0$ is the constant appearing in Lemma
\ref{lem:many-shifts-main3}. 
We claim that
\begin{equation}\label{eq:UV-geq-h}
u,v\geq h.
\end{equation}
Suppose for the sake of contradiction, for instance, that $u<h$. Use all the $u$ elements of
$X\cap[0,x/4]$ as shifts in Lemma \ref{lem:many-shifts-main3} (note that $u<h\leq h_1$). 
For each fixed
$a\in X\cap[0,x/4]$, at most $m$ elements
$y\in Y\cap[2x^{1/4},x/4]$ satisfy $y+a\notin\mathbb P$. It follows
that
\begin{equation}\label{eq:V-all-shifts}
v
\ll
x^{1/4}
+
\frac{x(\log x)^2}{u^2}
+
um.
\end{equation}
On the other hand, \eqref{eq:UV-product-main3} gives
\begin{equation}\label{eq:V-lower-product}
v
\gg
\frac{x}{u\log x}.
\end{equation}
To reach a contradiction, it is enough to show that \eqref{eq:V-all-shifts} and 
\eqref{eq:V-lower-product} cannot hold together. In fact, let us compare the three terms in \eqref{eq:V-all-shifts} with the
right-hand side of \eqref{eq:V-lower-product}. First,
$$
\frac{x^{1/4}}{x/(u\log x)}
=
\frac{u\log x}{x^{3/4}}
\leq
\frac{h_1 \log x}{x^{3/4}}
=o(1).
$$
Second, by \eqref{eq:UV-log4} we get 
$$
\frac{x(\log x)^2/u^2}{x/(u\log x)}
=
\frac{(\log x)^3}{u}
=o(1).
$$
For the last term, if $h=h_0$ then, using \eqref{eq:small-m-robust}, we obtain 
\begin{align*}
\frac{um}{x/(u\log x)}
&=
\frac{u^2m\log x}{x}
\leq
\frac{h_0^2m\log x}{x}
\\
&\ll
x^{-1/3}(m)^{1/3}(\log x)^{7/3}
\ll
(\log x)^{-1/3}
=o(1),
\end{align*}
Lastly, if $h=h_1$, then $h_1\leq h_0$,
and hence 
$
m\ll x^{1/4}(\log x)^5.
$ 
It follows that
$$
\frac{um}{x/(u\log x)}
=
\frac{u^2m\log x}{x}
\leq
\frac{h_1^2m\log x}{x}
\ll
x^{-1/4}(\log x)^4
=o(1).
$$
This is impossible, hence $u\geq h$, and symmetrically $v\ge h$. 

To complete the proof, choose $h$ distinct elements of $X\cap[0,x/4]$. Applying
Lemma \ref{lem:many-shifts-main3} once again gives
$$%\begin{equation}\label{eq:V-h-final}
v
\ll
x^{1/4}
+
\frac{x(\log x)^2}{h^2}
+
hm.
$$%\end{equation}
If $h=h_0$, then
$$
\frac{x(\log x)^2}{h^2}+hm
\ll
x^{1/3}(m \log x)^{2/3}. 
$$
If $h=h_1$, then
$m\ll x^{1/4}(\log x)^5$, and consequently
$$
\frac{x(\log x)^2}{h_1^2}+h_1m
\ll
\sqrt{x}(\log x)^4.
$$
Since $x^{1/4}\ll\sqrt{x}(\log x)^4$, it follows that
$$
v
\ll
\sqrt{x}(\log x)^4
+
x^{1/3}(m \log x)^{2/3}.
$$
The analogous argument gives the same estimate for $u$.
\end{proof}

We shall finally use the following submultiplicative inequality for
sumsets, due to Gyarmati, Matolcsi, and Ruzsa. 
%\cite[Theorem 1.2]{MR2676833}.

\begin{lem}\label{lem:sumset-submultiplicativity}
Let $r\geq2$ and let $E_1,\ldots,E_r$ be finite nonempty subsets of
$\mathbb N$. For each $i \in \{1,\ldots,r\}$, define 
$$
E_{-i}
:=
E_1+\cdots+E_{i-1}+E_{i+1}+\cdots+E_r.
$$
Then
\begin{equation}\label{eq:sumset-submultiplicativity}
|E_1+\cdots+E_r|^{r-1}
\leq
\prod_{i=1}^r |E_{-i}|.
\end{equation}
\end{lem}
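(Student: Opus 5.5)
We need to prove the Gyarmati–Matolcsi–Ruzsa inequality $|E_1+\cdots+E_r|^{r-1}\le\prod_i|E_{-i}|$. The plan is an entropy argument via Shearer's lemma, after first reducing to an injection statement. Alternatively one could simply cite their paper, but I sketch a self-contained route.

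\textbf{Step 1: reduction to a counting problem via a lexicographic encoding.} Let $S:=E_1+\cdots+E_r$. For each $s\in S$, choose the lexicographically smallest representation $s=e_1(s)+\cdots+e_r(s)$ with $e_i(s)\in E_i$. This gives an injection $\phi:S\to E_1\times\cdots\times E_r$, and we put $T:=\phi(S)$, so that $|T|=|S|$. For each $i$, let $\pi_{-i}$ denote the projection forgetting the $i$-th coordinate. Composing with summation yields a map $T\to E_{-i}$, $(e_j)\mapsto\sum_{j\neq i}e_j$. This map is generally not injective, so plain Shearer applied to the projections $\pi_{-i}(T)$ only gives $|T|^{r-1}\le\prod_i|\pi_{-i}(T)|$. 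That is insufficient, since $|\pi_{-i}(T)|$ can exceed $|E_{-i}|$.

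\textbf{Step 2: entropy with conditional structure.} Let $Z=(Z_1,\ldots,Z_r)$ be uniform on $T$, so that $H(Z)=\log|S|$. Set $W_i:=\sum_{j\ne i}Z_j\in E_{-i}$. The goal is
\[
(r-1)H(Z)\le\sum_i H(W_i),
\]
which follows if we show $(r-1)H(Z)\le\sum_i H(W_i)$ through chain-rule bounds. The key observation is that, given $W_i$ together with $s=W_i+Z_i$, the tuple is determined. In fact, since $\phi$ is injective, $Z$ is a function of $s=\sum_j Z_j$. Hence
\[
H(Z)=H(s)\le H(W_i)+H(Z_i\mid W_i).
\]
Summing these inequalities is still lossy, so the sharper approach is to write $H(Z)=\sum_i H(Z_i\mid Z_1,\ldots,Z_{i-1})$ in the lexicographic order. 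We then exploit the minimality property: $Z_i$ given $(Z_1,\ldots,Z_{i-1})$ and $W_i$ is determined, and knowing $Z_{<i}$ and $Z_{>i}$ determines $W_i$ only up to sums. The intended inequality is
\[
H(Z_{-i})\le H(W_i)+\text{(terms that telescope)},
\]
using that the lexicographically minimal choice makes $(Z_{<i},Z_{>i})$ recoverable from $(W_i,Z_{<i})$.

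\textbf{Step 3: Shearer/Han.} Han's inequality gives $(r-1)H(Z)\le\sum_i H(Z_{-i})$. Combined with Step 2, we would conclude once we establish $H(Z_{-i})\le H(W_i)$, i.e.\ that $Z_{-i}$ is a function of $W_i$ on $T$. This is exactly the main obstacle: injectivity of $Z_{-i}\mapsto W_i$ fails in general for a single fixed encoding $T$.

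To repair it, I would use the Gyarmati–Matolcsi–Ruzsa trick of choosing encodings adapted to each $i$. Fix $x\in S$ and $i$. The set of $i$-th coordinates $e_i$ with $x-e_i\in E_{-i}$ is used to build a chain of injections
\[
S\times\prod(\ldots)\hookrightarrow\prod_i E_{-i},
\]
proved by induction on $r$ via a Plünnecke-free double-counting argument: map $(s_1,\ldots,s_{r-1})\in S^{r-1}$ injectively into $E_{-1}\times\cdots\times E_{-r}$ by peeling off, at step $j$, the lexicographically maximal element of $E_j$ consistent with $s_j$. Verifying injectivity of this peeling map is the crux. I expect the proof ultimately to be this explicit injection, with the entropy discussion serving only as a guide.
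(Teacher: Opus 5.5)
\textbf{Verdict: genuine gap.} The paper gives no argument for this lemma; it simply quotes Theorem 1.2 of Gyarmati--Matolcsi--Ruzsa. Your proposal, as written, is not a proof. Step 2 is circular: it says the goal $(r-1)H(Z)\le\sum_i H(W_i)$ ``follows if we show'' the same inequality. The route you finally adopt, an explicit ``peeling'' injection $S^{r-1}\hookrightarrow E_{-1}\times\cdots\times E_{-r}$, is only described. You leave its injectivity open as ``the crux'', and nothing in the proposal suggests why it should be injective. Loomis--Whitney-type inequalities are not normally obtained from explicit injections of this kind, so this is a real hole.

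The gap is self-inflicted, because the obstacle you identify in Steps 1 and 3 does not exist for the encoding you chose. You claim that $|\pi_{-i}(T)|$ can exceed $|E_{-i}|$, equivalently that $Z_{-i}$ need not be a function of $W_i$ on $T$. That is true for an arbitrary choice of representatives, but false for lexicographically minimal ones, because the lex-min encoding is hereditary.

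Suppose $(e_1,\ldots,e_r)$ is the lex-min representation of $s$. Then $(e_j)_{j\ne i}$ is the lex-min representation of $w=\sum_{j\ne i}e_j$ in $\prod_{j\ne i}E_j$, with the induced lexicographic order. To see this, suppose some $(e'_j)_{j\ne i}$ with the same sum $w$ were lexicographically smaller. Reinserting $e_i$ gives a representation of $s$. Its first coordinate differing from $(e_1,\ldots,e_r)$ is some $j\ne i$ with $e'_j<e_j$, so it is lex-smaller than $(e_1,\ldots,e_r)$, a contradiction.

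Hence distinct elements of $\pi_{-i}(T)$ are lex-min representations of distinct elements of $E_{-i}$. So summation is injective on $\pi_{-i}(T)$ and $|\pi_{-i}(T)|\le |E_{-i}|$; this is exactly the statement that $Z_{-i}$ is a function of $W_i$. Now take $Z$ uniform on $T$ and apply Han's inequality $(r-1)H(Z)\le\sum_{i=1}^r H(Z_{-i})$. This gives
\[
(r-1)\log|S| \le \sum_i \log|\pi_{-i}(T)| \le \sum_i\log|E_{-i}|,
\]
which is the claim. Equivalently, apply Shearer's lemma to $T$ directly. The argument uses only a total order on each $E_j$ and commutativity of addition.

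So your Steps 1 and 3, plus this one observation, already form a complete and standard proof of the cited inequality. The ``repair'' via adapted encodings and peeling maps should be discarded.
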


\begin{proof}
This follows by \cite[Theorem 1.2]{MR2676833}.
\end{proof}

We can now prove the second main result.

\begin{proof}[Proof of Theorem \ref{thm:main3}]
Fix $k\geq3$ and sets $A_1,\ldots,A_k\subseteq\mathbb N$ with
cardinality at least $2$. 
Fix $x$ sufficiently large and write 
$$
m:=((A_1+\cdots+A_k)\bigtriangleup\mathbb P\bigr)(x).
$$
If 
$
m>x/(\log x)^8,
$ 
then $m\gg x^c$ for each $c \in (0,1)$, hence \eqref{eq:finalclaim} holds. Thus, we can assume hereafter that $m\le x/(\log x)^8$.

For each $i \in \{1,\ldots,k\}$, define 
$$
A_{-i}:=
A_1+\cdots+A_{i-1}+A_{i+1}+\cdots+A_k,
$$
so that $A_1+\cdots+A_k=A_{-i}+A_i.
$ 
Since both $A_{-i}$ and $A_i$ have cardinality at least $2$, we may
apply Lemma \ref{lem:robust-binary-main3} to the binary decomposition
$A_{-i}+A_i$. Notice that the corresponding error term is precisely
$m$, which is independent of $i$. Thus, setting
$$%\begin{equation}\label{eq:def-Q-main3}
Q(x,m):=
\sqrt{x}(\log x)^4
+
x^{1/3}(m \log x)^{2/3},
$$%\end{equation}
we obtain
\begin{equation}\label{eq:all-subsums-upper-main3}
A_{-1}(x/4),\ldots,A_{-k}(x/4)\ll Q(x,m).
\end{equation}

At this point, define 
$$
z:=
\left\lfloor
\frac{x}{4(k-1)}
\right\rfloor
\quad \text{ and } \quad 
E_i:=A_i\cap[0,z] \text{ for all }i \in\{1,\ldots,k\}.
$$
Observe that all but at most $m$ primes $p\leq z$ belong to $E_1+\cdots+E_k$, hence 
\begin{equation}\label{eq:kfold-lower-main3}
|E_1+\cdots+E_k|
\geq
\pi(z)-m
\gg
\frac{x}{\log x}.
\end{equation}

Define also the finite sets $E_{-i}:=E_1+\cdots+E_{i-1}+E_{i+1}+\cdots+E_k$ for all $i \in \{1,\ldots,k\}$. Using that $\max E_{-i}\le \sum_{j\neq i}\max E_j$, we obtain  
$$
E_{-i}\subseteq  A_{-i} \cap [0,(k-1)z]\subseteq A_{-i} \cap \left[0,\frac{x}{4}\right] 
$$
for each $i \in \{1,\ldots,k\}$, so that $|E_{-i}|\ll Q(x,m)$ by \eqref{eq:all-subsums-upper-main3}. It follows by \eqref{eq:kfold-lower-main3} and Lemma \ref{lem:sumset-submultiplicativity} that 
\begin{equation}\label{eq:Qk-upper-main3333}
\left(\frac{x}{\log x}\right)^{k-1}\ll |E_1+\cdots+E_k|^{k-1} \le \prod_{i=1}^k|E_{-i}| \ll Q(x,m)^k.
\end{equation}
Now, since $(a+b)^k \le 2^{k-1}(a^k+b^k)$ for all $a,b\ge 0$ (and $k$ is fixed), we have that 
\begin{equation}\label{eq:Qk-upper-main3}
Q(x,m)^k
\ll
x^{k/2}(\log x)^{4k}
+
x^{k/3}(m\log x)^{2k/3}.
\end{equation}
Recalling that $k\ge 3$, it follows that $k/2<k-1$ and, hence $x^{k/2}(\log x)^{4k}=o((x/\log x)^{k-1})$. 

Putting together \eqref{eq:Qk-upper-main3333} and \eqref{eq:Qk-upper-main3} and the last observation, we get 
$$
\left(\frac{x}{\log x}\right)^{k-1}\ll |E_1+\cdots+E_k|^{k-1}\ll x^{k/3}(m\log x)^{2k/3}.
$$
This implies that 
\begin{equation}\label{eq:m-lower-main3}
m
\gg
\frac{x^{1-3/(2k)}}
{(\log x)^{5/2-3/(2k)}},
\end{equation}
which is exactly our claim \eqref{eq:finalclaim}.
\end{proof}

\medskip

\begin{proof}
    [Proof of Corollary \ref{cor:powers}] 
    Note that 
    $$
    (P^\sharp \bigtriangleup \mathbb{P})(x) \ll \frac{\sqrt{x}}{\log x} 
    \quad \text{ as }x\to \infty.
    $$
    Setting $S:=A_1+\cdots+A_k$, it follows by the triangle inequality and Theorem \ref{thm:main3} that 
    \begin{displaymath}
        \begin{split}
            (S\bigtriangleup P^\sharp)(x)&\ge (S\bigtriangleup \mathbb{P})(x)-(\mathbb{P} \bigtriangleup P^\sharp)(x)\\
            &\gg \frac{x^{1-3/(2k)}}{(\log x)^{5/2-3/(2k)}}-\frac{\sqrt{x}}{\log x}
            \ge x^{1-\frac{3}{2k}-\varepsilon} 
            \quad \text{ as }x\to \infty
        \end{split}
    \end{displaymath}
          for each $\varepsilon>0$.
\end{proof}

\begin{comment}
\begin{rmk}\label{rmk:main3-comparison}
The proof of Theorem \ref{thm:main3} uses the many-shift estimate of
Elsholtz and Harper without improving it. The new point is the robust
treatment of the exceptional set. If
$$
m=
\left|
\bigl((X+Y)\mathbin{\bigtriangleup}\mathbb P\bigr)
\cap[0,x]
\right|,
$$
then $h$ simultaneous shifts introduce an error of size at most
$hm$. Balancing this against the term
$$
\frac{x(\log x)^2}{h^2}
$$
coming from \cite[Lemma 4.4]{MR3378834} yields the quantity
$$
x^{1/3}(\log x)^{2/3}m^{2/3}.
$$
Ruzsa's inequality then gives the critical scale
$$
m
\asymp
\frac{\sqrt{x}}{(\log x)^2}.
$$
\end{rmk}
\end{comment}

\bibliographystyle{amsplain}
\bibliography{refsxxx}

\end{document}